\documentclass[11pt]{amsart}

\usepackage[text={455pt,670pt},centering]{geometry}
\usepackage{color}
\usepackage{amssymb}
\usepackage{graphicx}
\usepackage{enumitem}
\usepackage{tikz}
\usepackage{srcltx}
\usepackage{hyperref}
\usepackage{dsfont}

\newtheorem{theorem}{Theorem}
\newtheorem{proposition}[theorem]{Proposition}
\newtheorem{lemma}[theorem]{Lemma}

\theoremstyle{definition}
\newtheorem{remark}[theorem]{Remark}

\newtheorem{definition}[theorem]{Definition}

\numberwithin{equation}{section}
\numberwithin{figure}{section}
\numberwithin{theorem}{section}

\newcommand{\Z}{\mathbb{Z}}
\newcommand{\N}{\mathbb{N}}
\newcommand{\R}{\mathbb{R}}
\newcommand{\RR}{\mathbb{R}}

\newcommand{\abs}[1]{\left| #1 \right|}

\DeclareMathOperator{\sgn}{sgn}

\renewcommand{\tilde}{\widetilde}

\newcommand{\eps}{\varepsilon}
\newcommand{\pa}{\partial}

\definecolor{refkey}{gray}{0.75}
\colorlet{labelkey}{blue} 
\setlength{\parskip}{1.5mm}
\begin{document}
\title[Synchronising and non-synchronising dynamics]{Synchronising and Non-synchronising dynamics for a two-species  aggregation model}

\begin{abstract}
This paper deals with analysis and numerical simulations of a one-dimensional two-species hyperbolic aggregation model.
This model is formed by a system of transport equations with nonlocal velocities, which describes the aggregate dynamics of a two-species population in interaction appearing for instance in bacterial chemotaxis.
Blow-up of classical solutions occurs in finite time. This raises the question to define measure-valued solutions for this system.
To this aim, we use the duality method developed for transport equations with discontinuous velocity to prove the existence and uniqueness of measure-valued solutions.
The proof relies on a stability result. In addition, this approach allows to study the hyperbolic limit of a kinetic chemotaxis model.
Moreover, we propose a finite volume numerical scheme whose convergence towards measure-valued solutions
is proved. It allows for numerical simulations capturing the behaviour after blow up.
Finally, numerical simulations illustrate the complex dynamics of aggregates until the formation of a single aggregate: after blow-up of classical solutions, aggregates of different species 
are synchronising or nonsynchronising when collide, that is move together or separately, depending on the parameters of the model and masses of species involved.
\end{abstract}

\author[C. Emako-Kazianou]{Casimir Emako-Kazianou}
\address{Sorbonne Universit\'es, UPMC Univ Paris 06, UMR 7598, Laboratoire Jacques-Louis Lions, F-75005, Paris, France\\
CNRS, UMR 7598, Laboratoire Jacques-Louis Lions, F-75005, Paris, France \\
INRIA-Paris-Rocquencourt, EPC MAMBA, Domaine de Voluceau, BP105, 78153 Le Chesnay Cedex}
\email{emako@ann.jussieu.fr}

\author[J.Liao]{Jie Liao}
\address{Department of Mathematics, East China University of Science and Technology,  Shanghai, 200237, P. R. China}
\email{liaojie@ecust.edu.cn}

\author[N. Vauchelet]{Nicolas Vauchelet}
\address{Sorbonne Universit\'es, UPMC Univ Paris 06, UMR 7598, Laboratoire Jacques-Louis Lions, F-75005, Paris, France \\
CNRS, UMR 7598, Laboratoire Jacques-Louis Lions, F-75005, Paris, France \\
INRIA-Paris-Rocquencourt, EPC MAMBA, Domaine de Voluceau, BP105, 78153 Le Chesnay Cedex}
\email{vauchelet@ann.jussieu.fr}

\keywords{hydrodynamic limit, duality solution, two-species chemotaxis, aggregate dynamics.}
\subjclass[2010]{35D30, 35Q92, 45K05, 65M08, 92D25}
\date{\today}

\maketitle

\section{Introduction}
Aggregation phenomena for a population of individuals interacting through an interaction potential are usually modelled by the so-called aggregation equation which 
is a nonlocal nonlinear conservation equation. This equation governs the dynamics of the density of individuals subject to an interaction potential $K$.
In this work, we are interested in the case where the population consists of two species which respond to the interaction potential in different ways.
In the one-dimensional case, the system of equations writes:
\begin{equation}\label{aggregation_eq}
 \partial_t \rho_{\alpha}+\chi_{\alpha} \partial_x \left(a(\rho)\rho_{\alpha}\right)=0,\quad \text{for } \alpha=1, 2,
\end{equation}
with 
\begin{equation*}
 a(\rho):=\int_{\R} \partial_x K(x-y)\rho(t,dy),\quad \rho:=\theta_1 \rho_1+\theta_2 \rho_2,
\end{equation*}
where $\theta_{\alpha},\chi_{\alpha}$ for $\alpha=1,2$ are positive constants.

In this work, we are interested in the case where the interaction potential $K$ in
\eqref{aggregation_eq} is {\it pointy} i.e. satisfies the following assumptions:
\begin{enumerate}[label=\textup{ }{(H\arabic*)}\textup{ },ref={(H\arabic*)}]
\item \label{h1} $K \in C^1(\R \backslash \{0\})$.
\item \label{h2} $\forall x \in \R,\quad  K(x)=K(-x)$.
\item \label{h3} $\partial_x K \in L^{\infty}(\R)$.
\item \label{h4} $K$ is $\lambda$-concave with $\lambda>0$ i.e., 
 \begin{equation*}
  \forall x,y \in \R^{*},\quad \left(\partial_x K(x)-\partial_x K(y) \right)(x-y)\leq \lambda (x-y)^2.
 \end{equation*}
\end{enumerate}
The aggregation equation arises in several applications in biology and physics. In fact, it is encountered in the modelling of cells which move in response to chemical cues.
The velocity of cells $a(\rho)$ depending on the distribution of nearby cells represents the gradient of the chemical substance which triggers the motion.
Cells gather and form accumulations near regions more exposed to oxygen as observed in \cite{Mittal,VCJS}. We can also describe the movement of pedestrians using the aggregation equation as in \cite{Helbing} 
where the velocity of pedestrians is influenced by the distribution of neighbours. This equation can also be applied to model opinion formation (see \cite{Sznajd}) where interactions between different opinions 
can be expressed by a convolution with the kernel $K$.

From the mathematical point of view, it is known that solutions to the aggregation equation with a pointy potential blow up in finite time (see e.g \cite{Francesco,Carrillo,Bertozzi}).
Then global-in-time existence for weak measure solutions has been investigated. In \cite{Carrillo}, existence of weak solutions for single species model has been obtained as a gradient flow.
This technique has been extended to the two-species model at hand in \cite{Francesco}. Another approach of defining weak solution for such kind of model has been proposed in \cite{jamesnv,GF_dual} for the single species case.
In this approach, the aggregation equation is seen as a transport equation with a discontinuous velocity $a(\rho)$.
Then solutions in the sense of duality have been defined for the aggregation equation.

Duality solutions has been introduced for linear transport equations with discontinuous velocity in the one-dimensional space in \cite{Bouchut}.
Then it has been adapted to the study of nonlinear transport equations in \cite{JamesBouchut,jamesnv,GF_dual}. 
In \cite{jamesnv,GF_dual}, the authors use this notion of duality solutions for the one-species aggregation equation. 
Such solutions are constructed by approximating the problem with particles, i.e. looking for a solution given by a finite
sum of Dirac delta functions. Particles attract themselves through the interacting potential $K$, when two particles collide,
they stick to form a bigger particle.

In this work, we extend this approach to the two species case. To do so, we need to modify the strategy to the problem at hand.
Indeed, collisions between particles of different species are more complex: particles can move together or separately after collision. 
This synchronising or non-synchronising dynamics implies several difficulties for the treatment of the dynamics of particles.
In fact, particles of different species can not stick when they collide.
Then an approximate problem is constructed by considering the transport equation with the a regularized velocity.
Then measure valued solutions are constructed by using a stability result.

An important advantage of this approach is that it allows to prove convergence of finite volume schemes.
Numerical simulations of the aggregation equation for the one-species case, which corresponds to the particular case of \eqref{aggregation_eq} when setting $\rho_2=0$, have been investigated by several authors. 
In \cite{Carrillo1} the authors propose a finite volume method consistent with the gradient flow structure of the equation, but no convergence result has been obtained. 
In \cite{Bertozzi1}, a Lagrangian method is proposed (see also the recent work \cite{Freda}). 
For the dynamics after blow up, a finite volume scheme which converges to the theoretical solution is proposed in \cite{VaucheletJ,CarrilloVauch}.
In the two-species case, the behaviour is more complex since the interaction between the two species can occur and they may synchronise or not i.e. move together or separately depending on the parameters of the models and the masses of species. 
A numerical scheme  illustrating this interesting synchronising or non-synchronising dynamics is provided in Section 6. 
In addition, a theoretical result on the convergence of the numerical approximation obtained
with our numerical scheme towards the duality solution is given. 
Such complex interactions phenomena have been observed experimentally in \cite{Casimir}.

System \eqref{aggregation_eq} can be derived from a hyperbolic limit of a kinetic chemotaxis model. In the case of two-velocities and in one space dimension, the kinetic chemotaxis model is given by
\begin{equation}\label{eq_cin_1d}
 \left\{
\begin{aligned}
& \partial_{t} f_{\alpha}^{\varepsilon}  +v\,\partial_{x} f_{\alpha}^{\varepsilon} = \frac{1}{\varepsilon}\int_{V}\left(T_{\alpha}[S](v',v)f_{\alpha}^{\varepsilon}(v')-T_{\alpha}[S](v,v') f_{\alpha}^{\varepsilon}(v)\right)dv',\quad \alpha=1,2,\,\,v\in V=\{\pm 1\},\\
& -\partial_{xx} S^{\varepsilon}+S^{\varepsilon}=\theta_1 \left(f_{1}^{\varepsilon}(1)   + f_{1}^{\varepsilon}(-1)\right) +  \theta_2 \left(f_{2}^{\varepsilon}(1)   + f_{2}^{\varepsilon}(-1)\right),
\end{aligned}
\right.
\end{equation}
where $f_{\alpha}^{\varepsilon}(x,v,t)$ stands for the distribution function of $\alpha$-th species at time $t$, position $x$ and velocity $v$, $S^{\varepsilon}(t,x)$ is the concentration of the chemical substance,
$T_{\alpha}[S](v,v')$ is the tumbling kernel from direction $v \in V$ to direction $v' \in V$ and $\varepsilon>0$ is a small parameter.
This tumbling kernel being affected by the gradient of the chemoattractant, is chosen as in \cite{DolakSch}
\begin{equation}\label{Turning}
 T_{\alpha}[S](v,v')=\psi_{\alpha}\left(1+\chi_{\alpha} v\partial_x S\right),
\end{equation}
where $\psi_{\alpha}$ is a positive constant called  the natural tumbling kernel and $\chi_{\alpha}$ is the chemosensitivity to the chemical $S$.
This kinetic model for chemotaxis has been introduced in \cite{OthmerAlt} to model the run-and-tumble process.
Existence of solutions to this two species kinetic system has been studied in \cite{Almeida}.

Summing and substracting equations \eqref{eq_cin_1d} with respect to $v=\pm 1$ for $f_\alpha^{\varepsilon}$ yields
\begin{equation}\label{first_order}
 \partial_{t} \rho_{\alpha}^{\varepsilon} + \partial_{x}J_{\alpha}^{\varepsilon}=0,
\end{equation}
\begin{equation}\label{second_order}
 \partial_{t} J_{\alpha}^{\varepsilon} + \partial_{x}\rho_{\alpha}^{\varepsilon}=\frac{2\psi_\alpha}{\varepsilon} (\chi_{\alpha}\partial_{x}S^{\varepsilon}\rho_{\alpha}^{\varepsilon}-J_{\alpha}^{\varepsilon}),\quad \alpha=1,2,
\end{equation}
where $\rho_{\alpha}^{\varepsilon}:=f_\alpha ^{\varepsilon}(1)+f_\alpha ^{\varepsilon}(-1)$ and $J_{\alpha}^{\varepsilon}:=(f_\alpha ^{\varepsilon}(1)-f_\alpha ^{\varepsilon}(-1))$.
Taking formally the limit $\eps\to 0$ in \eqref{second_order}, 
we deduce that $J_{\alpha}^{\varepsilon} \rightharpoonup \chi_{\alpha}\partial_x S^0 \rho_{\alpha}^0$ in the sense of distributions.
Injecting in \eqref{first_order}, we deduce formally that $\rho_\alpha^0$ satisfies the limiting equation:
\begin{equation}\label{limit_eq}
 \partial_{t}\rho_{\alpha}^0 + \chi_{\alpha} \partial_{x}((\partial_{x}S^{0}) \rho_{\alpha}^0 )=0,
\end{equation}
where $S^0$ satisfies the elliptic equation:
\begin{equation*}
-\partial_{xx} S^0 +S^0=\theta_1 \rho_1^0+\theta_2 \rho_2^0.
\end{equation*}
This latter equation can be solved explicitly on $\R$ and $S^0$ is given by
\begin{equation}\label{Kchemo}
 S^0=K*(\theta_1 \rho_1^0+\theta_2 \rho_2^0),\quad K=\frac{1}{2} e^{-\abs{x}}.
\end{equation}
Then we recover system \eqref{aggregation_eq}. This formal computation can be made rigorous.
The rigorous derivation of \eqref{limit_eq} from system \eqref{eq_cin_1d} will be proved in this work.\\

The paper is organized as follows. We first recall some basic notations and notions about the duality solutions and state our main results. 
Section 3 is devoted to the derivation of the macroscopic velocity used to define properly the product $a(\rho)\rho_{\alpha}$ and duality solutions. 
Existence and uniqueness of duality solutions are proved in Section 4, as well as its equivalence to gradient flow solutions. 
The convergence of the kinetic model \eqref{eq_cin_1d} as $\eps\to 0$ towards the aggregation model \eqref{limit_eq}-\eqref{Kchemo} is shown in Section 5.
Finally, a numerical scheme that captures the synchronising and non-synchronising behaviour of the aggregate equation is studied in Section 6, as well as several numerical 
examples showing the synchronising and non-synchronising dynamics.

\section{Notations and main results}
\subsection{Notations}
We will make use of the following notations. Let $T>0$, we denote
\begin{itemize}
 \item $L^1_+(\R)$ is the space of nonnegative functions of $L^1(\R)$.
 \item $C_0(\R)$ is the space of continuous functions that vanish at infinity.
 \item $\mathcal{M}_{\text{loc}}(\R)$ is the set of local Borel measures, $\mathcal{M}_{\text{b}}(\R)$ those whose total variation is finite:
 \begin{equation*}
  \mathcal{M}_b(\R)=\left\{\mu \in \mathcal{M}_{\text{loc}}(\R),\abs{\mu}(\R)<+\infty \right\}.
 \end{equation*}
\item $\mathcal{S}_\mathcal{M}=C([0,T],\mathcal{M}_{\text{b}}(\R)-\sigma(\mathcal{M}_{\text{b}}(\R),C_0(\R)))$ is the space of time-continuous bounded Borel measures endowed with the weak topology.
\item $\mathcal{P}_2(\R)$ is the Wasserstein space of order 2:
\begin{equation*}
 \mathcal{P}_2(\R)=\left\{\mu \quad \text{nonnegative borel measures in }\R \quad \text{s.t } \abs{\mu}(\R)=1, \int_{\R} \abs{x}^2 \mu(dx)< \infty \right\}.
\end{equation*}
\item For $H \in C(\R \backslash \{0\})$, we define $\widehat{H}$:
\begin{equation*}
 \widehat{H}=
 \left\{
 \begin{aligned}
  & H(x),\quad\text{for } x \neq 0,\\
  & 0, \quad \text{else}.
 \end{aligned}
 \right.
\end{equation*}
\end{itemize}
We notice that if $K$ satisfies \ref{h2} and \ref{h4}, we have by taking $y=-x$ in \ref{h4}
and using \ref{h2} that, $\forall\,x\in \RR$, $\partial_xK(x) x \leq \lambda x^2$.
We deduce that \ref{h4} holds for $\widehat{\pa_xK}$ i.e.:
\begin{equation}\label{Kchapobound}
\forall\, x,y \in \R, \quad (\widehat{\partial_x K}(x) -\widehat{\partial_x K}(y))(x-y) \leq \lambda (x-y)^2.
\end{equation}

We recall a compactness result on $\mathcal{M}_{\text{b}}(\R)-\sigma(\mathcal{M}_{\text{b}}(\R),C_0(\R))$. If there exists a sequence of bounded measures $\mu^n \in \mathcal{M}_{\text{b}}(\R)$
such that their total variations $\abs{\mu^n}(\R)$ are uniformly bounded, then there exists a subsequence of $\mu^n$ that converges weakly to $\mu$ in $\mathcal{M}_{\text{b}}(\R)$.

\subsection{Duality solutions}
For the sake of completeness, we recall the notion of duality solutions 
which has been introduced in \cite{Bouchut} for one dimensional linear scalar conservation law 
with discontinuous coefficients.
Let us then consider the linear conservation equation:
\begin{equation}\label{duality_eq}
\partial_t \rho+\partial_x (b(t,x)\rho)=0, \quad \text{in }  ]0,T[ \times \R,
 \end{equation}
with $T>0$. We assume weak regularity of the velocity field $b \in L^{\infty}(]0,T[\times \R)$
and $b$ satisfies the so-called one-sided Lipschitz (OSL) condition:
\begin{equation}\label{OSL}
\partial_x b\leq \gamma(t),\gamma \in L^1(]0,T[),\quad \text{in the sense of distributions}.
\end{equation}
In order to define duality solutions, we introduce the related backward problem
\begin{equation}\label{backward_eq}
\left\{
\begin{aligned}
 & \partial_t p + b \partial_x p=0,\\
 & p(T,\cdot)=p^T \in Lip_{loc}(\R).
\end{aligned}
\right.
\end{equation}
We define the set of exceptional solutions $\mathcal{E}$ as follows 
\begin{equation*}
 \mathcal{E}:=\left\{ p \in Lip_{loc}(]0,T[\times \R) \quad \text{solution to \eqref{backward_eq} with } p^T=0\right\}.
\end{equation*}
\begin{definition}[\textit{Reversible solutions} to \eqref{backward_eq}]\
We say that $p$ is a \textit{reversible} solution to \eqref{backward_eq} if and only if $p \in Lip_{loc}(]0,T[ \times \R)$ satisfies \eqref{backward_eq}
and is locally constant on $\mathcal{V}_e$, where $\mathcal{V}_e$ is defined by 
\begin{equation*}
 \mathcal{V}_e:=\left\{ (t,x) \in ]0,T[ \times \R; \; \exists p_e \in \mathcal{E} \, p_e(t,x)\neq 0\right\}.
\end{equation*}
\end{definition}
\begin{definition}[\textit{Duality solutions} to \eqref{duality_eq}, see \cite{Bouchut}]
We say $\rho \in \mathcal{S}_\mathcal{M}$ is a \textit{duality solution} to \eqref{duality_eq} in $]0,T[$ if for any $0<\tau\leq T$, and any $p$ reversible solution to \eqref{backward_eq} compactly supported in $x$,
the function $\displaystyle t\rightarrow \int_{\R}  p(t,x) \rho(t,dx)$ is constant on $[0,\tau]$.
\end{definition}
The following result shows existence and weak stability for duality solutions provided that  the velocity field satisfied the one-sided-Lipschitz condition.
\begin{theorem}[Theorem 2.1 in \cite{Bouchut}]\label{JBouchut}\ 
 \begin{enumerate}
  \item Given $\rho^{ini}\in \mathcal{M}_{b}(\R)$. Under the assumption \eqref{OSL}, there exists a unique $\rho \in \mathcal{S}_\mathcal{M}$, duality solution to \eqref{duality_eq},  such that $\rho(0,\cdot)=\rho^{ini}$.
  \item There exists a bounded Borel function $\hat{b}$, called universal representative of $b$ such that $\hat{b}=b$ a. e., and for any duality solution $\rho$,
  \begin{equation*}
   \partial_t \rho+\partial_x (\hat{b}\rho)=0,\quad \text{in the distributional sense.}
  \end{equation*}
 \item  Let $(b_n)_{n\in\N}$ be a bounded sequence in $L^{\infty}(]0,T[\times \R)$, with $b_n \rightharpoonup  b$ in $L^{\infty}(]0,T[ \times \R)-w*$. Assume that $\partial_x b_n\leq \gamma^n (t)$, where
 $(\gamma^n)_{n\in \N}$ is bounded in $L^1(]0,T[)$. Consider a sequence $\rho_n \in \mathcal{S}_\mathcal{M}$ of duality solutions to 
 \begin{equation*}
  \partial_t \rho_n +\partial_x(b_n \rho_n)=0,\quad \text{in } ]0,T[\times \R,
 \end{equation*}
such that $\rho_n(0,\cdot)$ is bounded in $\mathcal{M}_b(\R)$, and $\rho_n(0,\cdot)\rightharpoonup  \rho^{ini} \in \mathcal{M}_b(\R)$.
Then $\rho_n \rightharpoonup \rho$ in $\mathcal{S}_\mathcal{M}$, where $\rho \in \mathcal{S}_\mathcal{M}$ is the duality solution to
\begin{equation*}
 \partial_t \rho+\partial_x (b \rho)=0, \quad \text{in } ]0,T[ \times \R ,\quad \rho(0,\cdot)=\rho^{ini}.
\end{equation*}
Moreover, $\hat{b}_n \rho_n \rightharpoonup \hat{b} \rho$ weakly in $\mathcal{M}_b(]0,T[\times \R)$.
 \end{enumerate}
\end{theorem}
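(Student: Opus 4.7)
The plan is to exploit the duality with the backward transport equation \eqref{backward_eq}: the OSL bound \eqref{OSL} makes the backward flow \emph{expansive} (backward characteristics cannot cross), and this singles out a canonical \emph{reversible} backward solution $p$ for each $p^T \in \text{Lip}_{loc}(\R)$. The first step is to build such reversible solutions: mollify $b$ to $b_\eps$, smooth and with a uniform OSL bound, solve the resulting backward Cauchy problem by classical characteristics, and extract a $\text{Lip}_{loc}$ limit. Uniqueness of the reversible solution follows from the definition, since the difference of two reversible solutions lies in $\mathcal{E}$ and is locally constant on $\mathcal{V}_e$, hence identically zero.

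With reversible backward solutions in hand, part (1) is almost formal. Uniqueness: for any duality solution $\rho$ with initial datum $\rho^{ini}$, every test $\varphi \in C^1_c(\R)$ and every $\tau\in(0,T]$, the reversible solution $p$ of \eqref{backward_eq} with $p(\tau,\cdot)=\varphi$ gives $\int \varphi\,d\rho(\tau)=\int p(0,\cdot)\,d\rho^{ini}$, which pins down $\rho(\tau)$. Existence: push $\rho^{ini}$ forward along the smooth characteristics of $b_\eps$ to get $\rho_\eps$ with total variation bounded by $|\rho^{ini}|(\R)$, extract a weak-$*$ limit $\rho\in\mathcal{S}_\mathcal{M}$ by Banach--Alaoglu, and pass to the limit in the duality identity using the locally uniform convergence of the backward reversible solutions established in step one.

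For part (2), I would construct $\hat b$ as a Borel representative of $b$ via a measurable selection from the Filippov cone of $b$: where backward characteristics do not bifurcate, $\hat b(t,x)$ is forced by the characteristic velocity; elsewhere any measurable selection works. Testing the duality identity against $\varphi\in C^1_c((0,T)\times\R)$ then yields $\partial_t\rho+\partial_x(\hat b\rho)=0$ distributionally. For part (3), the uniform total-variation bound on $\rho_n$ gives a $\mathcal{S}_\mathcal{M}$-limit $\rho$ along a subsequence; the uniform OSL bound on $b_n$ yields equicontinuity of the backward reversible solutions $p_n$ with fixed terminal data, so after extraction one has a locally uniform limit $p$ to be identified as the reversible solution for $b$; passing to the limit in $\int p_n(t,\cdot)\,d\rho_n(t)=\mathrm{const}$ closes the loop, and uniqueness in (1) upgrades subsequential convergence to full convergence. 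The weak convergence $\hat b_n\rho_n \rightharpoonup \hat b\rho$ in $\mathcal{M}_b((0,T)\times\R)$ then follows from the distributional form of the PDE.

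The hard part will be the stability of reversible backward solutions under weak-$*$ (not strong) convergence of $b_n$, because one cannot pass to the limit naively in the product $b_n\partial_x p_n$. The argument must exploit the one-sided bound $\partial_x b_n \le \gamma^n(t)$ structurally, for instance via a monotone-type control on the minimal backward Filippov characteristic of $b$, in the spirit of Poupaud--Rascle. The Borel measurability of $\hat b$ on the bifurcation set, and the consistent definition of the product $\hat b\,\rho$ as a locally finite measure despite $\rho$ carrying atoms, are secondary but genuine technical obstacles.
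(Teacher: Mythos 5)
This statement is not proved in the paper at all: it is Theorem 2.1 of Bouchut--James, quoted verbatim as background (the citation \cite{Bouchut} is the proof), so there is no in-paper argument to compare yours against. What can be assessed is whether your outline would reconstruct the cited result, and there it has the right architecture --- reversible backward solutions built by mollification, uniqueness of the forward solution by testing against them, a universal representative for the flux, and stability via compactness of the backward solutions --- but the decisive steps are precisely the ones you defer. First, in your existence step you never explain why the $\mathrm{Lip}_{loc}$ limit of the mollified backward solutions is \emph{reversible}, i.e.\ locally constant on $\mathcal{V}_e$; this cannot be read off the definition, and in Bouchut--James it requires the equivalent characterisations of reversibility (as differences of solutions monotone in $x$, or via conservation of $\partial_x p$ as a nonnegative measure), which is where most of the work sits. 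Your uniqueness argument for reversible solutions (difference lies in $\mathcal{E}$, vanishes off $\mathcal{V}_e$, locally constant on $\mathcal{V}_e$, hence zero) is essentially sound, but it presupposes that the objects you constructed are reversible in the first place.

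Second, part (3) is the part of the theorem this paper actually uses (it is the engine behind Theorem \ref{Existence}, Theorem \ref{theo1.3} and Theorem \ref{convergence_numerical_scheme}), and your treatment of it is an acknowledgement that it is hard rather than an argument: passing to the limit in $\int p_n(t,\cdot)\,d\rho_n(t)$ requires identifying the locally uniform limit of the $p_n$ as \emph{the} reversible solution for the weak-$*$ limit $b$, and that identification is exactly the point where the one-sided Lipschitz structure must be used quantitatively (via the monotone characterisation of reversibility or the contraction of backward Filippov characteristics). Similarly, the universality of $\hat b$ --- one Borel representative that makes the distributional equation hold for \emph{every} duality solution, including purely atomic ones --- does not follow from an arbitrary measurable selection on the bifurcation set; in Bouchut--James it is extracted from the flux measure of the duality solutions themselves. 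As a roadmap to the literature your proposal is serviceable; as a proof it is missing its two central lemmas.
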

\subsection{Main results}
Up to a rescaling, we can assume without loss of generality that the total mass of each species
is normalized to 1. Then we will work in the space of probabilities for densities.

The first theorem states the existence and uniqueness of duality solutions for system \eqref{aggregation_eq} and its equivalence with the gradient flow solution considered in \cite{Francesco}.
\begin{definition}(Duality solutions for system \eqref{aggregation_eq})\label{defdual}
 We say that $(\rho_1,\rho_2)\in C([0,T],\mathcal{M}_b(\R)^2)$ is a duality solution to \eqref{aggregation_eq} if there exists $\hat{a}(\rho) \in L^{\infty}((0,T) \times \R)$ and $\gamma \in L^1_{loc}([0,T])$ satisfying
 $\partial_x \hat{a} \leq \gamma$ in the sense of distributions, such that for all $0<t_1<t_2<T$,
\begin{equation}\label{distributional_eq}
\partial_t \rho_{\alpha}+\chi_{\alpha} \partial_x (\hat{a}(\rho)\rho_{\alpha})=0, \quad \text{for } \alpha=1,2, \quad \rho=\theta_1 \rho_1+\theta_2 \rho_2,
\end{equation}
in the sense of duality on $(t_1,t_2)$ and $\hat{a}(\rho)=\partial_x K*\rho$ a.e.
We emphasize that the final datum for \eqref{distributional_eq} should be $t_2$ instead of $T$.
\end{definition}

Then, we have the following existence and uniqueness result:
\begin{theorem}[Existence, uniqueness of duality solution and equivalence to gradient flow solution]\ \label{Existence}
 Let $T>0$ and $(\rho_{1}^{ini},\rho_{2}^{ini})\in \mathcal{P}_2(\R)^2$. Under assumptions \ref{h1}--\ref{h4}, there exists a unique duality solution $(\rho_1,\rho_2) \in C([0,T],\mathcal{P}_2(\R)^2)$ to \eqref{aggregation_eq}  in the sense of Definition \ref{defdual}
with $(\rho_1,\rho_2)(t=0)=(\rho_{1}^{ini},\rho_{2}^{ini})$ such that 
\begin{equation}\label{macro_velocity}
\hat{a}(\rho):=\int_{\R}\widehat{\partial_x K}(x-y)\rho(t,dy),\quad \rho=\theta_1 \rho_1+\theta_2 \rho_2.
\end{equation}
\item This duality solution is equivalent to the gradient flow solution defined in \cite{Francesco}.
\end{theorem}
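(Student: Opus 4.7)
The plan is threefold: build a duality solution by regularizing the kernel $K$ and passing to the limit through the stability part of Theorem \ref{JBouchut}; prove uniqueness via a Wasserstein-type stability estimate exploiting the $\lambda$-concavity \ref{h4}; and deduce equivalence with the gradient flow solution of \cite{Francesco} from the resulting uniqueness class.

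For existence, I would mollify $K$ by $K_\eps := K * \eta_\eps$ so that $a_\eps(\rho) := \partial_x K_\eps * \rho$ becomes smooth and globally Lipschitz in $x$. A classical Picard/characteristics argument then delivers $(\rho_1^\eps,\rho_2^\eps) \in C([0,T],\mathcal{P}_2(\R)^2)$ solving the regularized system, with mass conservation, uniform second-moment bounds (via \ref{h3}), and the crucial one-sided Lipschitz estimate $\partial_x a_\eps(\rho^\eps) \leq \lambda$ inherited from \ref{h4} through \eqref{Kchapobound}. Weak-$\ast$ compactness yields a candidate limit $(\rho_1,\rho_2)$; the macroscopic-velocity analysis of Section~3 identifies $a_\eps(\rho^\eps) \rightharpoonup \hat a(\rho)$ with $\hat a$ given by \eqref{macro_velocity}. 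The stability statement of Theorem \ref{JBouchut}, applied to each $\alpha$ separately with $b_\eps = \chi_\alpha a_\eps(\rho^\eps)$, then certifies that $(\rho_1,\rho_2)$ is a duality solution in the sense of Definition \ref{defdual}.

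For uniqueness, I would compare two duality solutions $(\rho_\alpha)$, $(\tilde\rho_\alpha)$ with the same initial data through their Filippov flows $X_\alpha,\tilde X_\alpha$ driven by $\chi_\alpha \hat a(\rho)$, $\chi_\alpha \hat a(\tilde\rho)$, using the push-forward representation $\rho_\alpha(t) = (X_\alpha(t,\cdot))_\# \rho_\alpha^{ini}$. Exploiting \eqref{Kchapobound} together with \ref{h2} and \ref{h3}, a direct computation should give a differential inequality of the form
\begin{equation*}
\frac{d}{dt} \sum_{\alpha=1,2} \theta_\alpha\, W_2^2(\rho_\alpha(t),\tilde\rho_\alpha(t)) \leq C \sum_{\alpha=1,2} \theta_\alpha\, W_2^2(\rho_\alpha(t),\tilde\rho_\alpha(t)),
\end{equation*}
with $C = C(\lambda,\chi_1,\chi_2,\theta_1,\theta_2)$, and Gr\"onwall then yields uniqueness and Lipschitz dependence on data in $W_2$. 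For the equivalence statement, the gradient flow curve $(\rho_1^{GF},\rho_2^{GF})$ of \cite{Francesco} is an absolutely continuous curve in $\mathcal{P}_2(\R)^2$ satisfying the continuity equation distributionally with velocity $\chi_\alpha \hat a(\rho^{GF})$ and the same OSL bound; it therefore meets Definition \ref{defdual} and must agree with the duality solution just constructed.

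The hard part will be the identification $a_\eps(\rho^\eps) \rightharpoonup \hat a(\rho)$ in the existence step: aggregation produces Dirac masses precisely at the singular set of $\partial_x K$, so the product $a(\rho)\rho_\alpha$ is ambiguous without prescribing a pointwise value at $0$. The canonical choice $\widehat{\partial_x K}(0) = 0$, forced by antisymmetry of $\partial_x K$ from \ref{h2}, makes $\hat a$ the universal representative demanded by Theorem \ref{JBouchut}, and its OSL property, developed in Section~3, drives both the passage to the limit and the $W_2$ stability estimate. The weighting by $\theta_\alpha$ in the Wasserstein sum is likewise not optional: it is needed to balance the different propagation speeds $\chi_1 \ne \chi_2$ that couple only through the common density $\rho = \theta_1 \rho_1 + \theta_2 \rho_2$.
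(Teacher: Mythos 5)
Your overall strategy coincides with the paper's: regularise the kernel, solve the Lipschitz problem, pass to the limit with the stability part of Theorem~\ref{JBouchut} together with the identification of the limit product $a^n(\rho^n)\rho^n_\alpha\rightharpoonup\hat a(\rho)\rho_\alpha$ (Lemma~\ref{technical_lemma}), prove uniqueness by a weighted Wasserstein contraction driven by \eqref{Kchapobound}, and obtain the gradient-flow equivalence from uniqueness. Your single-step mollification $K_\eps=K*\eta_\eps$ is a harmless variant of the paper's two-step regularisation, and your diagnosis of the key difficulty (the value $\widehat{\partial_xK}(0)=0$ forced by \ref{h2}, and the OSL bound from \ref{h4}) is exactly right.

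There is, however, one concrete error in the uniqueness step: the weight in your Lyapunov functional is wrong. You propose $\sum_\alpha\theta_\alpha W_2^2(\rho_\alpha,\tilde\rho_\alpha)$, but the correct weights are proportional to $\theta_\alpha/\chi_\alpha$ (the paper takes $w_1=1$, $w_2=\chi_1\theta_2/(\chi_2\theta_1)$ in \eqref{defW2}). The point is the cross-species term: differentiating in time produces, via the pseudo-inverses $F_\alpha^{-1},G_\alpha^{-1}$, the two contributions
\begin{equation*}
2w_1\chi_1\theta_2\!\int\!\!\int\!\bigl(\widehat{\partial_xK}(F_1^{-1}-F_2^{-1})-\widehat{\partial_xK}(G_1^{-1}-G_2^{-1})\bigr)(F_1^{-1}-G_1^{-1})
\;-\;2w_2\chi_2\theta_1\!\int\!\!\int\!\bigl(\cdots\bigr)(F_2^{-1}-G_2^{-1}),
\end{equation*}
and only when $w_1\chi_1\theta_2=w_2\chi_2\theta_1$ do these recombine into $\bigl(\widehat{\partial_xK}(u)-\widehat{\partial_xK}(v)\bigr)(u-v)$ with $u=F_1^{-1}-F_2^{-1}$, $v=G_1^{-1}-G_2^{-1}$, which is the only form to which the one-sided estimate \eqref{Kchapobound} applies (the kernel difference alone has no sign and is not small). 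With your weights this matching fails whenever $\chi_1\neq\chi_2$, the leftover term can only be bounded by $\|\partial_xK\|_\infty$ times an $L^1$ distance, and the resulting inequality $\frac{d}{dt}W^2\le CW^2+CW$ does not yield uniqueness from $W(0)=0$. The fix is simply to replace $\theta_\alpha$ by $\theta_\alpha/\chi_\alpha$; the rest of your argument (including the Filippov-flow versus pseudo-inverse formulation, which are equivalent in one dimension) then goes through as in the paper.
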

In our second main result, we prove the convergence of the kinetic model \eqref{eq_cin_1d} towards the aggregation model.
\begin{theorem}[Hydrodynamical limit of the kinetic model]\label{theo1.3}
Assume that $\chi_{\alpha}(\theta_1+\theta_2)<1$ for $\alpha=1,2$. Let $T>0$ and $(f_{\alpha}^{\varepsilon},S^{\varepsilon})$ be a solution to the kinetic-elliptic equation  \eqref{eq_cin_1d} 
such that $f_{\alpha}^{\varepsilon}(t=0)=f_{\alpha}^{ini}$ and $f_{\alpha}^{ini} \in L^{\infty}\cap L^1_{+}(\R)$ and $\displaystyle \int_{\R} x^2 f_{\alpha}^{ini}dx<\infty$.\\
Then, as $\varepsilon\rightarrow 0$, $(f_{\alpha}^{\varepsilon},S^{\varepsilon})$ converges in the following sense:
\begin{equation*}
 \begin{aligned}
  & \rho_{\alpha}^{\varepsilon} := f_{\alpha}^{\varepsilon}(1) + f_{\alpha}^{\varepsilon}(-1) \rightharpoonup \rho_{\alpha} \quad \text{weakly in }\quad \mathcal{S}_{M},\quad \text{for }\alpha=1, 2,  \\
  & S^{\varepsilon}\rightharpoonup S  \quad \text{in}\quad C([0,T], W^{1,\infty}(\mathbb{R}))-weak,
 \end{aligned}
\end{equation*}
where $\rho_{\alpha}$ is the unique duality solution of \eqref{limit_eq} and $S=K*(\theta_1 \rho_1+\theta_2 \rho_2)$ given in Theorem~\ref{Existence}.
\end{theorem}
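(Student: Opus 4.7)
The strategy is to rewrite the kinetic system via its zeroth and first $v$-moments as in \eqref{first_order}--\eqref{second_order}, obtain $\varepsilon$-uniform estimates yielding compactness of $(\rho_\alpha^\varepsilon,S^\varepsilon)$, and then identify the limit using the stability of duality solutions, Theorem~\ref{JBouchut}~(3). First, the hypothesis $\chi_\alpha(\theta_1+\theta_2)<1$, together with the explicit formula $\partial_x S^\varepsilon=\partial_x K*(\theta_1\rho_1^\varepsilon+\theta_2\rho_2^\varepsilon)$ and $\|\partial_x K\|_\infty\le 1/2$, implies that the tumbling kernel $T_\alpha[S^\varepsilon]$ stays positive uniformly in $\varepsilon$. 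A maximum principle on \eqref{eq_cin_1d} then gives uniform $L^1\cap L^\infty$ bounds on $f_\alpha^\varepsilon$, and a standard Grönwall argument on $\int x^2 f_\alpha^\varepsilon\,dx$ yields uniform tightness. Because $|J_\alpha^\varepsilon|\le\rho_\alpha^\varepsilon$, equation \eqref{first_order} produces equicontinuity in time of $\rho_\alpha^\varepsilon$ in the weak-$*$ topology, so up to a subsequence $\rho_\alpha^\varepsilon\rightharpoonup\rho_\alpha$ in $\mathcal{S}_\mathcal{M}$.

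Next, I would upgrade the convergence of $S^\varepsilon$ and verify a uniform one-sided Lipschitz bound on the effective velocity. Since $\rho^\varepsilon:=\theta_1\rho_1^\varepsilon+\theta_2\rho_2^\varepsilon$ is uniformly bounded in $L^\infty$, the elliptic equation gives $\partial_{xx}S^\varepsilon=S^\varepsilon-\rho^\varepsilon$ uniformly bounded in $L^\infty$, so $\partial_x S^\varepsilon$ is uniformly Lipschitz in $x$; after extraction, $\partial_x S^\varepsilon\to \partial_x S$ locally uniformly in $x$, with $S=K*(\theta_1\rho_1+\theta_2\rho_2)$ by passing to the limit in the elliptic equation. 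The same identity gives $\partial_x(\chi_\alpha\partial_x S^\varepsilon)\le \chi_\alpha\|S^\varepsilon\|_\infty$, so the velocity field $b^\varepsilon:=\chi_\alpha\partial_x S^\varepsilon$ satisfies \eqref{OSL} uniformly in $\varepsilon$.

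Finally, multiplying \eqref{second_order} by $\varepsilon$ and combining with \eqref{first_order} yields
\[
\partial_t\rho_\alpha^\varepsilon+\partial_x\bigl(b^\varepsilon\,\rho_\alpha^\varepsilon\bigr)=\frac{\varepsilon}{2\psi_\alpha}\,\partial_x\bigl(\partial_t J_\alpha^\varepsilon+\partial_x\rho_\alpha^\varepsilon\bigr),
\]
whose right-hand side vanishes in $\mathcal{D}'$ thanks to the uniform bounds on $J_\alpha^\varepsilon$ and $\rho_\alpha^\varepsilon$. Combining the locally uniform convergence $b^\varepsilon\to b:=\chi_\alpha\partial_x S$ with the weak convergence $\rho_\alpha^\varepsilon\rightharpoonup\rho_\alpha$ shows that $b^\varepsilon\rho_\alpha^\varepsilon\to b\rho_\alpha$ in $\mathcal{D}'$, so $\rho_\alpha$ solves the limit equation distributionally with an OSL velocity. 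Theorem~\ref{JBouchut}~(3), applied to $b^\varepsilon$ together with a perturbation argument comparing $\rho_\alpha^\varepsilon$ to the genuine duality solution of the transport equation driven by $b^\varepsilon$ (which absorbs the $O(\varepsilon)$ source), then identifies $\rho_\alpha$ as the duality solution of \eqref{limit_eq}, with universal representative $\chi_\alpha\widehat{\partial_x K}*\rho$ in the sense of \eqref{macro_velocity}. Uniqueness from Theorem~\ref{Existence} promotes subsequential convergence to convergence of the full family. The principal obstacle is the identification of the nonlinear product $\partial_x S^\varepsilon\,\rho_\alpha^\varepsilon$ in the limit, since $\rho_\alpha$ may carry atoms on which $\partial_x K$ is discontinuous; the elliptic regularisation of $S^\varepsilon$, which upgrades weak to locally uniform convergence of $\partial_x S^\varepsilon$, is the decisive ingredient that allows the product with the merely weakly converging measure $\rho_\alpha^\varepsilon$ to be taken.
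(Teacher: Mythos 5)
Your overall architecture (pass to the moment system, get compactness of $\rho_\alpha^\varepsilon$ from mass conservation and time-equicontinuity, write $J_\alpha^\varepsilon=\chi_\alpha\partial_xS^\varepsilon\rho_\alpha^\varepsilon+O(\varepsilon)$, identify the limit, and conclude by uniqueness) is the same as the paper's. But the step you yourself flag as decisive --- upgrading $\partial_xS^\varepsilon$ to locally uniform convergence via elliptic regularity --- does not work, because it rests on a uniform-in-$\varepsilon$ $L^\infty$ bound on $f_\alpha^\varepsilon$ (hence on $\rho^\varepsilon=\theta_1\rho_1^\varepsilon+\theta_2\rho_2^\varepsilon$) that is false. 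The kinetic equation has a stiff $1/\varepsilon$ gain term, so the maximum principle only gives bounds of the form $e^{Ct/\varepsilon}$; and indeed no uniform $L^\infty$ bound can hold on $[0,T]$ for $T$ past the blow-up time of the limit equation, since the limit $\rho_\alpha$ is a genuine measure with atoms. Consequently $\partial_{xx}S^\varepsilon=S^\varepsilon-\rho^\varepsilon$ is not uniformly bounded, $\partial_xS^\varepsilon$ is not uniformly Lipschitz, and it cannot converge locally uniformly: its limit $\partial_xK*\rho$ is discontinuous precisely at the atoms of $\rho$, which is where the product $\partial_xS^\varepsilon\,\rho_\alpha^\varepsilon$ actually carries mass. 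So your mechanism for identifying the nonlinear product collapses exactly in the regime the theorem is designed to cover.

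The paper resolves this differently: it writes the product as a double integral $\int\partial_xK(x-y)\,\rho_\beta^\varepsilon(t,dy)\,\rho_\alpha^\varepsilon(t,dx)\,\phi(t,x)$ and invokes Lemma~\ref{technical_lemma}, which uses the tight convergence of $\rho_\alpha^\varepsilon\otimes\rho_\beta^\varepsilon$, the uniform bound $\|\partial_xK\|_{L^\infty}$, and the fact that the mass of the limit product measure near (but off) the diagonal is small, to show the limit is $\int\widehat{\partial_xK}(x-y)\rho_\beta(t,dy)\rho_\alpha(t,dx)\phi(t,x)$ with the convention $\widehat{\partial_xK}(0)=0$. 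This is what pins down the correct representative \eqref{macro_velocity} of the velocity at the atoms, and it exploits the oddness of $\partial_xK$ rather than any regularity of $S^\varepsilon$. Your remaining ingredients (positivity of $T_\alpha$, $|J_\alpha^\varepsilon|\le\rho_\alpha^\varepsilon$, the uniform OSL bound $\partial_xb^\varepsilon\le\chi_\alpha S^\varepsilon$ which needs only $\rho^\varepsilon\ge0$, and the $O(\varepsilon)$ remainder vanishing in $\mathcal{D}'$) are fine, but you should replace the ``locally uniform convergence of $\partial_xS^\varepsilon$'' argument by an argument of the type of Lemma~\ref{technical_lemma}, and then conclude via the OSL bound and the uniqueness of duality solutions as you propose.
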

The condition $\chi_{\alpha}(\theta_1+\theta_2)<1$ in the previous theorem is needed to guarantee that the tumbling kernel $T_\alpha[S]$ defined in \eqref{Turning} is positive.

To conclude this Section on the main results, we emphasize that, a finite volume scheme to simulate \eqref{distributional_eq} is proposed in Section 6 and its convergence towards duality solutions is stated in Theorem~\ref{convergence_numerical_scheme}.

\section{Macroscopic velocity}
In this section, we find the representative $\hat{a}$ of $a$ for which existence and uniqueness of duality solutions hold. 
To this end, we consider the similar system of transport equations to \eqref{aggregation_eq} associated to the velocity $a^n$ which converges to $a$.
Next, the limit of the product $a^n(\rho_{\alpha}^n) \rho_{\beta}^n$ for $\alpha,\beta=1,2$ is computed.

\subsection{Regularisation}
We build a sequence $(a^n)_{n\in \N}$ which converges to $a$ by considering the sequence of regularised kernels $\partial_x K^{n}$ approaching $\partial_x K$.
\begin{lemma} \label{uniform_bound}
 Let $(\partial_x K^{n})_{n\in \N}$ be the sequence of regular kernels defined by 
 \begin{equation*}
 \partial_{x} K^{n}(x)=
 \left\{
 \begin{aligned}
  & \partial_x K(x),\quad \text{for } \abs{x}>\frac{1}{n},\\
  & n\partial_x K\left(\frac{1}{n}\right)x,\quad \text{else}.
 \end{aligned}
\right.
\end{equation*}
Then
\begin{equation*}
 \partial_x K^n \in C^0(\R),\quad \forall x\in \R,\quad \partial_x K^n(-x)=-\partial_x K^n(x),
\end{equation*}
and 
\begin{equation*}
\|\partial_x K^n\|_\infty \leq \|\partial_x K\|_\infty,\quad \partial_{xx} K^n \leq \lambda \quad \text{in the distributional sense}.
\end{equation*}
\end{lemma}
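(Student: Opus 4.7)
The plan is to verify each of the four assertions directly from the piecewise definition of $\partial_x K^n$ and the hypotheses \ref{h1}--\ref{h4}; only the distributional bound requires any real care.

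For continuity, $\partial_x K^n = \partial_x K$ on $\{|x| > 1/n\}$ which is continuous by \ref{h1}, and $\partial_x K^n$ is linear on $(-1/n, 1/n)$. Matching at $x = 1/n$ is immediate: the linear piece gives $n \partial_x K(1/n) \cdot (1/n) = \partial_x K(1/n)$, which agrees with the outer value. At $x = -1/n$, hypothesis \ref{h2} makes $\partial_x K$ odd, so both pieces take the value $-\partial_x K(1/n)$. Oddness of $\partial_x K^n$ on all of $\R$ then follows: on $\{|x| > 1/n\}$ it is inherited from \ref{h2}, and on $(-1/n, 1/n)$ it is manifest in the explicit formula $n \partial_x K(1/n)\,x$. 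The uniform bound is also easy: outside $(-1/n, 1/n)$, $|\partial_x K^n(x)| = |\partial_x K(x)| \leq \|\partial_x K\|_\infty$, while inside, $|\partial_x K^n(x)| \leq n |\partial_x K(1/n)| \cdot (1/n) = |\partial_x K(1/n)| \leq \|\partial_x K\|_\infty$.

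The main step is the distributional bound $\partial_{xx} K^n \leq \lambda$. Since $\partial_x K^n$ is continuous on $\R$ by the preceding step, its distributional derivative coincides with its a.e.\ classical derivative, so no Dirac mass appears at the junctions $\pm 1/n$. On $\{|x| > 1/n\}$ that derivative equals $\partial_{xx} K$, and \ref{h4} is precisely the statement that the difference quotient of $\partial_x K$ is bounded above by $\lambda$ on each connected component $(-\infty, -1/n)$ and $(1/n, +\infty)$, giving $\partial_{xx} K \leq \lambda$ in the distributional sense there. On $(-1/n, 1/n)$, the classical derivative is the constant slope $n \partial_x K(1/n)$; to bound it by $\lambda$ I would invoke the elementary consequence of \ref{h2} and \ref{h4} recorded just before the lemma (obtained by taking $y = -x$ in \ref{h4}), namely $\partial_x K(x)\,x \leq \lambda x^2$ for every $x \in \R$, evaluated at $x = 1/n$: this gives $\partial_x K(1/n)/n \leq \lambda/n^2$, i.e.\ $n \partial_x K(1/n) \leq \lambda$. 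Assembling the three pointwise bounds yields the distributional inequality on all of $\R$.

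The only mild obstacle is this final gluing: it is crucial that the continuity check be performed first, since that is what guarantees the absence of singular contributions at $\pm 1/n$ when computing $\partial_{xx} K^n$ in the sense of distributions. Everything else amounts to a direct evaluation of the piecewise formula against the hypotheses on $K$.
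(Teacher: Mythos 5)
Your proof is correct and follows essentially the same route as the paper: continuity, oddness, and the sup bound are read off the piecewise formula, and the distributional bound $\partial_{xx}K^n\leq\lambda$ is obtained by checking each piece separately — the slope bound $n\,\partial_xK(1/n)\leq\lambda$ coming from \ref{h4} with $x=-1/n$, $y=1/n$ (equivalently, the remark $\partial_xK(x)\,x\leq\lambda x^2$ evaluated at $x=1/n$, which is derived by that very substitution) — and gluing across the junctions $\pm 1/n$ using continuity. Your explicit observation that continuity is what rules out singular contributions at the junctions is a point the paper leaves implicit, but the argument is the same.
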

\begin{proof}
From \ref{h1}, $\partial_x K\in C^0(\R \backslash \{0\})$ and since $\partial_x K^n$ is continuous at $\pm \frac{1}{n}$, we conclude that $\partial_x K^n \in C^0(\R)$.
From \ref{h2}, we deduce that $\partial_x K$ is an odd function. Using the definition of $\partial_x K^n$ and \ref{h3}, we get that $\|\partial_x K^n\|_\infty \leq \|\partial_x K\|_\infty$.
From the construction of $\partial_x K^n$, we have that $\partial_x K^n=\partial_x K$ outside the interval $[-\frac{1}{n},\frac{1}{n}]$ and from \ref{h4} one sees 
$\partial_{xx} K^n \leq \lambda$ in  $\R \backslash (-\frac{1}{n},\frac{1}{n})$ in the sense of distributions. In addition, if we take $x=-\frac{1}{n}$ and $y=\frac{1}{n}$ in \ref{h4}, we have that
\begin{equation*}
 n\partial_x K\left(\frac{1}{n}\right)\leq \lambda.
\end{equation*}
Since $\partial_{xx} K^n=n\partial_x K\left(\frac{1}{n}\right)$ in $[-\frac{1}{n},\frac{1}{n}]$, 
we conclude that $\partial_{xx} K^n \leq \lambda$ in $[-\frac{1}{n},\frac{1}{n}]$ in the sense of distributions.
Finally, we obtain that $\partial_{xx} K^n \leq \lambda$ in the sense of distributions.
\end{proof}
In the rest of the paper, the notation $\partial_x K^n$ will refer to the 
regularised kernels of Lemma \ref{uniform_bound}.
Given $\partial_x K^n$, the velocity $a^n$ is defined similarly to \eqref{macro_velocity} as
\begin{equation}\label{f11-1}
 \forall \rho \in \mathcal{S}_\mathcal{M},\quad a^n(\rho):=\int_{\R}\partial_x K^n(x-y)\rho(t,dy).
\end{equation}
In the following lemma, we show that if $\rho_{\alpha}^n$ and $\rho_{\beta}^n$ admit weak limits $\rho_{\alpha}$ and $\rho_{\beta}$ respectively in $\mathcal{S}_\mathcal{M}$, then 
the limit of the product $a^n(\rho_{\alpha}^n)\rho_{\beta}^n$ is $\hat{a}(\rho_{\alpha})\rho_{\beta}$. 
Contrary to \cite{Poupaud2} where the two-dimensional case is considered, 
this limiting measure does not charge the diagonal.
\begin{lemma}\label{technical_lemma}
For $\alpha=1,2$, let $\{\rho_{\alpha}^{n}\} \in \mathcal{S}_\mathcal{M}$ be a sequence such that $\forall n\in \N,\forall t \in [0,T], \abs{\rho_{\alpha}^{n}}(t,\R)=M_{\alpha}$.
Suppose that there exists $\rho_{\alpha}$ in $\mathcal{S}_\mathcal{M}$ such that 
 \begin{equation*}
  \rho_{\alpha}^{n} \rightharpoonup \rho_{\alpha} \quad \text{weakly in }  \mathcal{S}_\mathcal{M},
 \end{equation*}
 Then, we have
 \begin{equation*}
  a^n(\rho_{\alpha}^n)\rho_{\beta}^n \rightharpoonup \hat{a}(\rho_{\alpha})\rho_{\beta} \quad \text{weakly in }  \mathcal{M}_b([0,T] \times \mathbb{R}),\quad \alpha,\beta=1,2,
 \end{equation*}
where  $a^n(\cdot)$ and $\hat{a}(\cdot)$ are defined in \eqref{macro_velocity},\eqref{f11-1} respectively.
That is for $\phi \in C_0([0,T] \times \R)$, 
\begin{equation*}
 \int_0^T\int_{\mathbb{R}^2} \partial_x K^{n}(x-y)\rho_{\alpha}^{n}(t,dx) \rho_\beta ^{n}(t,dy) \phi(t,x)dt\rightarrow \int_0^T \int_{\R^2} \widehat{\partial_x K}(x-y)\rho_{\alpha}(t,dx) \rho_\beta (t,dy) \phi(t,x)dt .
 \end{equation*}
\end{lemma}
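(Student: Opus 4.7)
The idea is a cut-off decomposition of the kernel into a continuous bounded ``principal'' part (away from the diagonal $\{x=y\}$) and a ``residual'' concentrated on a shrinking neighborhood of the diagonal. Fix $\phi\in C_0([0,T]\times\R)$ and $\delta>0$, and pick a smooth $\chi_\delta:\R\to[0,1]$ with $\chi_\delta\equiv 1$ on $[-\delta,\delta]$ and $\chi_\delta\equiv 0$ outside $[-2\delta,2\delta]$. Split $\partial_x K^n=(1-\chi_\delta)\partial_x K^n+\chi_\delta\partial_x K^n$. Because the regularization of $\partial_x K^n$ is confined to $[-1/n,1/n]$, once $n\geq 1/\delta$ the first piece equals $(1-\chi_\delta(z))\partial_x K(z)$, independent of $n$, and continuous on $\R$ (the singularity at $0$ being killed by the cut-off).

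The principal part is handled by weak convergence of tensor products. The function $F_\delta(t,x,y):=(1-\chi_\delta(x-y))\partial_x K(x-y)\phi(t,x)$ is continuous and bounded; approximating it by finite sums of elementary tensors $\eta(t)\psi_1(x)\psi_2(y)$ with $\eta,\psi_1,\psi_2\in C_0$ (controlling the approximation error by the uniform mass bounds $M_\alpha,M_\beta$), the hypotheses $\rho_\alpha^n\rightharpoonup\rho_\alpha$ and $\rho_\beta^n\rightharpoonup\rho_\beta$ in $\mathcal{S}_\mathcal{M}$ give
\[
\int_0^T\!\!\int_{\R^2} F_\delta(t,x,y)\,\rho_\alpha^n(t,dy)\rho_\beta^n(t,dx)\,dt \longrightarrow \int_0^T\!\!\int_{\R^2} F_\delta(t,x,y)\,\rho_\alpha(t,dy)\rho_\beta(t,dx)\,dt.
\]

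The two residual terms, on the prelimit and limit sides, are both bounded by $\|\partial_x K\|_\infty\|\phi\|_\infty T$ times the mass of $\{|x-y|\leq 2\delta\}$ under the corresponding tensor products. Since $\widehat{\partial_x K}(0)=0$ and $\widehat{\partial_x K}$ is continuous off the origin, dominated convergence yields that the residual on the limit side tends to $0$ as $\delta\to 0$. The conclusion then follows by sending first $n\to\infty$ in the principal part and then $\delta\to 0$. The hard part is controlling the prelimit residual uniformly in $n$: one must show that $(\rho_\alpha^n\otimes\rho_\beta^n)(\{|x-y|\leq 2\delta\})\to 0$ as $\delta\to 0$ uniformly in $n$. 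Weak convergence of each factor alone does not suffice, since mass can in principle concentrate on the diagonal (and then the prelimit would detect the jump of $\partial_x K$ at $0$, whereas the limit sees $\widehat{\partial_x K}(0)=0$); this is overcome by exploiting the extra structure of the approximating sequences---for instance uniform $L^\infty$-in-space bounds inherited from the regularized problem, or the one-sided Lipschitz control $\partial_x a^n(\rho_\alpha^n)\leq\lambda M_\alpha$ coming from the $\lambda$-concavity of $K^n$ established in the previous lemma, combined with the oddness of $\partial_x K^n$---to preclude concentration of $\rho_\alpha^n\otimes\rho_\beta^n$ near the diagonal, in agreement with the remark preceding the lemma that the limiting measure does not charge the diagonal.
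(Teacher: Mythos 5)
There is a genuine gap, and it sits exactly where you say the ``hard part'' is. Your decomposition into a continuous part supported off the diagonal plus a residual on a band $\{\abs{x-y}\leq 2\delta\}$ is in substance the same as the paper's (the paper uses the regularised kernel $\partial_x K^R$, which agrees with $\partial_x K$ off the band $E_R=\{x\neq y,\ \abs{x-y}\leq 1/R\}$, in place of your cut-off $(1-\chi_\delta)\partial_xK$, and handles the off-diagonal part by tight convergence of $\mu^n=\rho_\alpha^n\otimes\rho_\beta^n$ tested against a continuous function vanishing on the diagonal). But you stop at the residual estimate, merely listing tools that might close it, so the proof is not complete. Worse, the estimate you announce as the target --- that $(\rho_\alpha^n\otimes\rho_\beta^n)(\{\abs{x-y}\leq 2\delta\})\to 0$ as $\delta\to0$ uniformly in $n$ --- is the wrong statement and is false in the very situation the lemma is designed for: after blow-up both species carry atoms at the same point, the limit $\mu$ charges the diagonal, and hence $\mu^n(\{\abs{x-y}\leq 2\delta\})$ stays of order one for every fixed $\delta$. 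The whole point of replacing $\partial_xK$ by $\widehat{\partial_x K}$ is that the diagonal mass is allowed to be positive but contributes nothing because $\widehat{\partial_x K}(0)=0$ and $\partial_xK^n(0)=0$. The residual must therefore be measured on the \emph{punctured} band $\{0<\abs{x-y}\leq 2\delta\}$, and the quantity to control is $\limsup_{n}\mu^n(t,\{0<\abs{x-y}\leq\delta\})$ as $\delta\to0$, not a supremum over $n$ of the full band mass.

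This is precisely how the paper closes the argument: it introduces $E_N=\{(x,y):x\neq y,\ \abs{x-y}\leq 1/N\}$, notes that $\bigcap_N E_N=\emptyset$ so $\mu(t,E_N)\to0$, and then writes $\mu^n(t,E_n)\leq\mu^n(t,E_N)\leq\abs{(\mu^n-\mu)(t,E_N)}+\mu(t,E_N)$ for $n\geq N$, invoking the tight convergence $\mu^n(t,\cdot)\to\mu(t,\cdot)$ (total variations are constant in $n$) to make the first term small; the conclusion then follows by dominated convergence in $t$. Your suggested substitutes --- uniform $L^\infty$ bounds or the one-sided Lipschitz estimate on $a^n$ --- are not available here: the lemma is stated for arbitrary sequences in $\mathcal{S}_\mathcal{M}$ with constant total variation and weak convergence, with no reference to the regularised PDE, so no such extra structure can be ``inherited''. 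To repair your write-up you should (i) exploit that both kernels vanish exactly on the diagonal so only the punctured band matters, and (ii) prove the two-parameter smallness $\lim_{\delta\to0}\limsup_n\mu^n(t,\{0<\abs{x-y}\leq\delta\})=0$ from the convergence of $\mu^n$ to $\mu$ as the paper does, rather than a (false) uniform-in-$n$ statement on the closed band.
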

\begin{proof}
Before starting the proof of the lemma, we first introduce some notations which simplify the computations 
\begin{equation}\label{def}
\begin{aligned}
  \mu^n(t,dx,dy)&:=\rho_{\alpha}^n(t,dx) \otimes \rho_{\beta}^n(t,dy),\quad E_n:=\left\{(x,y)\in \R^2, x\neq y, \abs{x-y}\leq \frac{1}{n}\right\},\\
  \mu(t,dx,dy)&:=\rho_{\alpha}(t,dx) \otimes \rho_{\beta}(t,dy).\\
\end{aligned}
\end{equation}
For $\phi \in C_0([0,T] \times \R)$, we denote
\begin{equation*}
 A_n(t):=\int_{\mathbb{R}^2} \partial_x K^{n}(x-y)\mu^{n}(t,dx,dy) \phi(t,x)-\int_{\R^2} \widehat{\partial_x K} (x-y)\mu(t,dx,dy) \phi(t,x),
\end{equation*}
\textbf{Step 1:} Convergence almost everywhere in time of $A_n(t)$.\vspace{0.5cm}\\
Since $\partial_x K^{n}(0)=0$, we have 
\begin{equation*}
\begin{aligned}
A_n(t)&=\int_{\R^2} \widehat{\partial_x K^{n}}(x-y)\mu^{n}(t,dx,dy) \phi(t,x)-\int_{\R^2} \widehat{\partial_x K}(x-y)\mu(t,dx,dy)\phi(t,x),\\
\phantom{A}&=\text{I}_n(t)+\text{II}_n(t),
\end{aligned}
\end{equation*}
where $\text{I}_n(t)$ and $\text{II}_n(t)$ are defined by 
\begin{equation*}
\begin{aligned}
 \text{I}_n(t)&:=\int_{\R^2}\left(\widehat{\partial_x K^{n}}(x-y)-\widehat{\partial_x K}(x-y)\right)\mu^{n}(t,dx,dy) \phi(t,x),\\
  \text{II}_n(t)&:=\int_{\R^2}\widehat{\partial_x K}(x-y)\left(\mu^{n}(t,dx,dy)-\mu(t,dx,dy)\right)\phi(t,x).
\end{aligned}
\end{equation*}
From the definition of $\partial_x K^n$ in Lemma~\ref{uniform_bound}, it follows that
\begin{equation*}
 \text{I}_n(t)=\int_{E_n}\left(\partial_x K^{n}(x-y)-\partial_x K(x-y)\right)\mu^{n}(t,dx,dy) \phi(t,x).
\end{equation*}
The estimate on $\|\partial_x K^n\|_{L^{\infty}}$ in Lemma~\ref{uniform_bound} and \ref{h3} imply that 
\begin {equation*}
 \abs{\text{I}_n(t)}\leq 2\|\phi\|_{L^{\infty}}\|\partial_x K\|_{L^{\infty}} \mu^n(t,E_n),
\end {equation*}
with $\mu^n$ and $E_n$ defined in \eqref{def}.\\
Let $\varepsilon>0$. Since the set $E_n$ converges to the empty set, there exists $N \in \mathbb{N}$ such that  $\forall n \geq N$,
\begin{equation}\label{ineq11}
\mu(t,E_n) \leq \varepsilon.
\end{equation}
For all $n\geq N$, we observe that $E_n \subset E_N$, we have
\begin{equation}\label{ineq}
\mu^n(t,E_n)\leq \mu^n(t,E_N) \leq (\mu^n-\mu)(t,E_N)+\mu(t,E_N). 
\end{equation}
From the weak convergence of $\rho_\alpha^n$, $\alpha=1,2$, we note that the sequence $\mu^n(t,\cdot)$ 
converges weakly to $\mu(t,\cdot)$. 
Since the total variation of $\mu^n(t,\cdot)$ is constant in $n$, the tight convergence is achieved. 
Then, there exists $N'$ such that $\forall n \geq N'\geq N$
\begin{equation*}
\abs{\mu^n-\mu}(t,E_N) \leq \varepsilon.
\end{equation*}
From \eqref{ineq} and \eqref{ineq11}, we conclude that $\forall n \geq N'\geq N$,
\begin{equation*}
\mu^n(t,E_n) \leq 2 \varepsilon.
\end{equation*}
Hence, for all $n \geq N'$, we get
\begin{equation}\label{f11-5}
 \abs{\text{I}_n(t)} \leq 2\|\phi\|_{L^{\infty}}\|\partial_x K\|_{L^{\infty}} \mu^n(t,E_n) \leq 4\|\phi\|_{L^{\infty}}\|\partial_x K\|_{L^{\infty}} \ \varepsilon.
\end{equation}
We deduce that $\text{I}_n(t) \longrightarrow 0$.

Next, we show that $\text {II}_n(t)$ tends to zero.
\begin{equation*}
 \begin{aligned}
  \text {II}_n(t)&=\int_{\R^2}(\widehat{\partial_x K}(x-y)-\widehat{\partial_x K^R}(x-y))\phi(t,x)(\mu^n(t,dx,dy)-\mu^n(t,dx,dy))\\
           &+\int_{\R^2}\widehat{\partial_x K^R}(x-y)\phi(t,x)(\mu^n(t,dx,dy)-\mu(t,dx,dy)),\\
           &:=\text {II}_n^1(t)+\text {II}_n^2(t),
 \end{aligned}
\end{equation*}
where $R$ is an integer which will be fixed later.
From the construction of $\partial_x K^R$ in Lemma~\ref{uniform_bound}, we get
\begin{equation*}
 \text {II}_n^1=\int_{E_R}(\partial_x K(x-y)-\partial_x K^R(x-y))\phi(t,x)(\mu^n(t,dx,dy)-\mu(t,dx,dy)).
\end{equation*}
Therefore, one has
\begin{equation*}
\abs{\text {II}_n^1(t)}\leq 2\|\partial_x K\|_{L^{\infty}}\|\phi\|_{L^{\infty}}\left(\mu^n(t,E_R)+\mu(t,E_R)\right).
\end{equation*}
Let $\varepsilon>0$.
Using \eqref{ineq}, by the same token as previously, there exists $N$ such that for all $n\geq N$,
\begin{equation*}
 \mu^n(t,E_N) \leq 2 \varepsilon,\quad \mu(t,E_n)\leq \varepsilon, 
\end{equation*}
Setting $R=N$, we conclude that for all $n \geq N$,
\begin{equation*}
 \abs{\text {II}_n^1(t)} \leq 6 \varepsilon \|\partial_x K\|_{L^{\infty}}\|\phi\|_{L^{\infty}}.
\end{equation*}
For $\text {II}_n^2(t)$, we notice that $\partial_x K^N(x-y)\phi(t,x)$ is a continuous function that vanishes on the diagonal $(x,x)$ and we have
\begin{equation*}
 \int_{\R^2}\widehat{\partial_x K^N}(x-y)\phi(t,x)(\mu^n-\mu)(t,dx,dy)= \int_{\mathbb{R}^2}\partial_x K^N(x-y)\phi(t,x)(\mu^n-\mu)(t,dx,dy).
\end{equation*}
The tight convergence of $\mu^n$ to $\mu$ implies that there exists $N''>0$ such that for all $n\geq N''$
\begin{equation*}
 \abs{\int_{\R^2}\widehat{\partial_x K^M}(x-y) \phi(t,x)(\mu^n-\mu)(t,dx,dy)} \leq \varepsilon.
\end{equation*}
Therefore for all $n\geq \max\{N',N''\}$, one has
\begin{equation}\label{f11-6}
 \abs{\text{II}_n(t)} \leq \varepsilon(1+6\|\partial_x K\|_{L^{\infty}}\|\phi\|_{L^{\infty}}).
\end{equation}
This implies that $\text {II}_n(t)$ converges to 0.\\
Combining \eqref{f11-5} and \eqref{f11-6}, we deduce that for almost every $t \in [0,T]$, $A_n$ converges to 0.\vspace{0.5cm}\\
\textbf{Step 2:} Lebesgue's dominated convergence theorem\vspace{0.5cm}\\
For all $t\in [0,T]$, we have that 
\begin{equation*}
 \abs{A_n(t)}\leq 2\|\phi\|_{L^{\infty}}\|\partial_x K\|_{L^{\infty}}M_{\alpha} M_{\beta}.
\end{equation*}
Since $A_n$ converges almost everywhere to 0, $\int_{0}^T A_n(t)dt$ converges to zero from Lebesgue's dominated convergence theorem.
\end{proof}

\subsection{OSL condition on the macrosocopic velocity}

\begin{proposition}\label{OSL_condition}
 Let $T,M$ be positive constants and $\rho \in \mathcal{S}_\mathcal{M}$ be a positive measure such that $\forall t\in [0,T], \abs{\rho}(t,\R)=M$. 
 Let $K$ be such that assumption \ref{h4} hold. Let $\hat{a}(\rho)$ and $a^n(\rho)$ be defined in \eqref{macro_velocity} and \eqref{f11-1} respectively.
 Then, there exists $\kappa \in L^1([0,T])$ such that 
 \begin{equation*}
 \partial_x \hat{a}(t,x) \leq \kappa(t),\quad \partial_x a^n(t,x) \leq \kappa(t),
\quad \mbox{ in the sense of distributions.}
 \end{equation*}
\end{proposition}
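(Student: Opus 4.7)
The plan is to exploit the $\lambda$-concavity assumption \ref{h4} together with its extension \eqref{Kchapobound} to $\widehat{\partial_x K}$, and the distributional bound $\partial_{xx} K^n \leq \lambda$ provided by Lemma~\ref{uniform_bound}. Since $\rho$ is positive with constant total mass $M$ in time, both desired OSL estimates will come out with the same, time-independent constant $\kappa(t) \equiv \lambda M$, which trivially lies in $L^1([0,T])$.

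\textbf{The regularised velocity $a^n(\rho)$.} Here a convolution argument works directly. For any nonnegative test function $\varphi \in C_c^\infty(\R)$, Fubini's theorem and the distributional bound $\partial_{xx} K^n \leq \lambda$ give
\begin{align*}
\langle \partial_x a^n(\rho)(t,\cdot), \varphi\rangle
&= -\int_\R \int_\R \partial_x K^n(x-y)\,\partial_x \varphi(x)\,dx\,\rho(t,dy) \\
&= \int_\R \langle \partial_{xx} K^n(\cdot-y),\,\varphi\rangle\,\rho(t,dy) \;\leq\; \lambda M \int_\R \varphi(x)\,dx,
\end{align*}
where the inequality uses positivity of both $\varphi$ and $\rho$, together with $\rho(t,\R)=M$. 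Hence $\partial_x a^n(\rho) \leq \lambda M$ in the distributional sense.

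\textbf{The limit velocity $\hat a(\rho)$.} Here the naive identity $\partial_x \hat a(\rho) = \widehat{\partial_{xx} K} * \rho$ is delicate because $\widehat{\partial_x K}$ may jump at $0$, so I argue pointwise via \eqref{Kchapobound}. For any $x_1 > x_2$ and every $y \in \R$, applying \eqref{Kchapobound} to the pair $(x_1-y,\, x_2-y)$ (whose difference equals $x_1-x_2 > 0$) and dividing yields
\[
\widehat{\partial_x K}(x_1-y) - \widehat{\partial_x K}(x_2-y) \;\leq\; \lambda(x_1-x_2).
\]
Integrating this inequality against the nonnegative measure $\rho(t,dy)$ of total mass $M$ gives
\[
\hat a(\rho)(t,x_1) - \hat a(\rho)(t,x_2) \;\leq\; \lambda M (x_1-x_2),
\]
so that $x \mapsto \hat a(\rho)(t,x) - \lambda M x$ is non-increasing, which is equivalent to $\partial_x \hat a(\rho) \leq \lambda M$ in the sense of distributions.

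The only conceptual obstacle is precisely the one mentioned above: for $\hat a$ one cannot naively differentiate under the integral sign because of the possible jump of $\widehat{\partial_x K}$ at zero, which could a priori create a Dirac contribution with unfavourable sign. Passing through the pointwise $\lambda$-concavity estimate \eqref{Kchapobound}, which was set up in the preliminaries precisely to extend \ref{h4} across $x=y=0$, cleanly bypasses this issue; the uniformity in $n$ is automatic since $\lambda$ and $M$ are independent of $n$.
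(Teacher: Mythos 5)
Your proof is correct and follows essentially the same route as the paper: the key step for $\hat a$ is exactly the paper's computation, namely integrating the one-sided estimate \eqref{Kchapobound} for $\widehat{\partial_x K}$ against the nonnegative measure $\rho(t,dy)$ of mass $M$ to obtain the OSL bound with $\kappa\equiv\lambda M$. The only (harmless) deviation is that for $a^n$ you invoke the distributional bound $\partial_{xx}K^n\leq\lambda$ via Fubini, whereas the paper simply notes that $K^n$ is also $\lambda$-concave and repeats the pointwise argument.
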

\begin{proof}
For $x,y \in \R$, we compute:
\begin{multline*}
  (\hat{a}(\rho)(t,x)-\hat{a}(\rho)(t,y))(x-y)=
  \int_{\R}(\widehat{\partial_x K}(x-z)-\widehat{\partial_x K}(y-z))(x-y)\rho(t,dz).
\end{multline*}
Using the $\lambda$-concavity of $K$, we deduce from \eqref{Kchapobound}
\begin{equation*}
 (\hat{a}(\rho)(t,x)-\hat{a}(\rho)(t,y))(x-y) \leq \lambda (x-y)^2 \int_{\R}\rho(t,dz) \leq \lambda M (x-y)^2.
\end{equation*}
Since $K^n$ is also $\lambda$ concave from the proof of Lemma \ref{uniform_bound}, we get the one-sided Lipschitz
estimate on $a^n$ by the same token as for $a$.
\end{proof}

\section{Existence and uniqueness of duality solutions}
\subsection{Proof of the existence of duality solutions in Theorem~\ref{Existence}}

The proof is divided into several steps. First, we construct an approximate problem for which the existence of duality solutions holds.
Then, we pass to the limit in the approximate problem to get the existence of duality solutions thanks to the weak stability of duality solutions stated in Theorem~\ref{JBouchut}
and recover Equation~\eqref{distributional_eq} from Lemma~\ref{technical_lemma}.
Finally, we recover the bound on the second order moment.\\

\textbf{Step 1}: Existence of duality solutions for the approximate problem\vspace{0.3cm}\\
The macroscopic velocity $a$ is replaced by an approximation $a^n$ defined in \eqref{f11-1} and the following system is considered:
\begin{equation}\label{macro_eq_eps}
  \partial_{t} \rho_{\alpha}^{n} + \chi_{\alpha} \partial_{x} \left(a^{n}(\theta_1\rho_{1}^{n}+\theta_2\rho_{2}^{n})\rho_{\alpha}^{n}\right) = 0,\quad \text{for }\alpha=1, 2.
\end{equation}
Since $\partial_x K^n$ is not Lipschitz continuous, we first consider $\partial_x K^{n,m}$ an approximation of $\partial_x K^n$ obtained by a convolution with a molifier.
The solution $\rho_{\alpha}^{n,m}$ to the following equation is investigated.
\begin{equation}\label{macro_eq_eps_eps}
 \partial_t \rho_{\alpha}^{n,m}+\chi_{\alpha} \partial_x \left(a^{n,m}(\theta_1 \rho_{1}^{n,m}+\theta_2 \rho_2^{n,m})\rho_{\alpha}^{n,m}\right)=0,\quad \text{for }\alpha=1,2,
\end{equation}
where $a^{n,m}$ is given by
\begin{equation*}
 a^{n,m}(\rho):=\int_\R \partial_x K^{n,m}(x-y) \rho(t,dy).
\end{equation*}
Applying Theorem 1.1 in \cite{Mercier} gives the existence of solutions $\rho_{\alpha}^{n,m}$ in $L^{\infty}([0,T],\mathcal{M}_b(\R))$ and $\abs{\rho_{\alpha}^{n,m}}(t,\R)=\abs{\rho_{\alpha}^{ini}}(\R)=1$. 
Since the velocity field $a^{n,m}$ is Lipschitz, $\rho^{n,m}_\alpha$ is a duality solution.
In addition, for $\phi \in C^{\infty}_c(\mathbb{R})$ we have for $\alpha=1,2$ the following estimate:
\begin{equation*}
 \frac{d}{dt}\left(\int_{\R}\rho_{\alpha}^{n,m} (t,dx)\phi(x)\right)
 =\int_{\R^2}\partial_x K^{n,m}(x-y)(\theta_1 \rho_1^{n,m}(t,dy)+\theta_2 \rho_2^{n,m}(t,dy))\rho_{\alpha}^{n,m}(t,dx) \partial_x \phi(x).
\end{equation*}
Then, 
\begin{equation}\label{estimate_rho}
 \abs{\frac{d}{dt}\left(\int_{\R}\rho_{\alpha}^{n,m}(t,dx)\phi(x)\right)} \leq \|\partial_x \phi\|_{L^{\infty}}\|\partial_x K\|_{L^{\infty}}(\theta_1+\theta_2).
\end{equation}
Using \eqref{estimate_rho} and the density of $C_c^{\infty}(\R)$ in $C_0(\R)$, we deduce that $\rho_{\alpha}^{n,m} \in \mathcal{S}_\mathcal{M}$.
Moreover, the equicontinuity of $\rho_{\alpha}^{n,m}$ in $\mathcal{S}_\mathcal{M}$ follows from \eqref{estimate_rho} and the density of $C_c^{\infty}(\R)$ in $C_0(\R)$.
Since $\abs{\rho_{\alpha}^{n,m}}(t,\R)=\abs{\rho_{\alpha}^{ini}}(\R)=1$, Ascoli Theorem gives the existence of a subsequence in $m$ of $\rho_{\alpha}^{n,m}$ which converges to a limit named $\rho_{\alpha}^n$ in $\mathcal{S}_\mathcal{M}$.
We pass to the limit when $m$ tends to infinity in Equation \eqref{macro_eq_eps_eps} and obtain that $\rho_{\alpha}^n$ satisfies \eqref{macro_eq_eps}.\\

\textbf{Step 2} : Extraction of a convergent subsequence of $\rho_{\alpha}^n$ and existence of duality solutions.\vspace{0.3cm}\\
As above, there exists a subsequence of $\rho_{\alpha}^n$ in $\mathcal{S}_\mathcal{M}$ such that
\begin{equation*}
 \rho_{\alpha}^n\rightharpoonup \rho_{\alpha} \quad \text{weakly in }\mathcal{S}_\mathcal{M},\quad \text{for }\alpha=1,2.
\end{equation*}
Let us find the equation satisfied by $\rho_{\alpha}$ in the distributional sense.
Let $\phi$ be in $C^{\infty}_c([0,T] \times \R)$. Since $\rho_{\alpha}^n$ satisfies \eqref{macro_eq_eps} in the distributional sense, we have 
\begin{equation*}
\int_0^T\int_{\R} \partial_t \phi (t,x)\rho_{\alpha}^n(t,dx)dt +\chi_{\alpha} \int_0^T \int_{\R}a^n(\theta_1\rho_1^n+\theta_2\rho_2^n)\rho_{\alpha}^n(t,dx)\phi(t,x)dt=0.
\end{equation*}
Using Lemma~\ref{technical_lemma}, we can pass to the limit in the latter equation and obtain, 
\begin{equation*}
\int_0^T\int_{\R} \partial_t \phi(t,x) \rho_{\alpha}(t,dx)dt+\chi_{\alpha} \int_0^T \int_{\R}\hat{a}(\theta_1\rho_1+\theta_2\rho_2)\rho_{\alpha}(t,dx)\phi(t,x)dt=0.
\end{equation*}
Thus $\rho_{\alpha}$ satisfies \eqref{distributional_eq} in the sense of distributions. 
From Proposition \ref{OSL_condition}, the macroscopic velocity $a^n(\rho^n)$ satisfies an uniform OSL condition. Then, by weak stability of duality solutions in (see Theorem~\ref{JBouchut} (3)), we deduce that 
\begin{equation*}
  \partial_t \rho_{\alpha}+\chi_{\alpha}\partial_x(\hat{a}(\rho)\rho_{\alpha})=0, \quad \text{for } \alpha=1,2,\quad \text{ in the sense of duality in } ]0,T[ \times \mathbb{R}.
\end{equation*}

\textbf{Step 3} : Finite second order moment.\vspace{0.3cm}\\
From Equation \eqref{distributional_eq}, we deduce that the first and second moments satisfy in the sense of distributions
\begin{equation*}
 \begin{aligned}
  \frac{d}{dt}\left(\int \abs{x}\rho_{\alpha}(t,dx)\right)&=-\int \sgn(x) \hat{a}(\rho) \rho_{\alpha}(t,dx),\\
  \frac{d}{dt}\left(\int \abs{x}^2 \rho_{\alpha}(t,dx)\right)&=-2\int \hat{a}(\rho) \rho_{\alpha}(t,dx),\quad \text{for }\alpha=1,2.
 \end{aligned}
\end{equation*}
Since $\rho_{\alpha}^{ini} \in \mathcal{P}_2(\R)$ and $\hat{a}(\rho)$ is bounded from \eqref{h2}, we deduce that the first two moments of $\rho_{\alpha}$ are finite, then $\rho_{\alpha}(t) \in \mathcal{P}_2(\R)$ for $t>0$.
\qed
\begin{remark}
 If we define the weighted center of mass of the system $x_c$ as follows:
 \begin{equation*}
  x_c(t):=\frac{\theta_1}{\chi_1} \int_\R x \rho_1(t,dx) + \frac{\theta_2}{\chi_2} \int_\R x \rho_2(t,dx).
 \end{equation*}
 We remark from straightforward computation that $\frac{d}{dt}x_c=0$. 
 Then the weighted center of mass is conserved for this system.
\end{remark}

\subsection{Proof of the uniqueness of duality solutions in Theorem~\ref{Existence}}
Uniqueness relies on a stability estimate in Wasserstein distance, which is the metric endowed in $\mathcal{P}_2(\R)$.
This Wasserstein distance $d_W$ is defined by (see e.g. \cite{Villani1,Villani2})
\begin{equation*}
 d_W(\nu,\mu)=\inf_{\gamma \in \Gamma(\nu,\mu)} \left\{\int_{\R^2} \abs{y-x}^2\gamma(dx,dy)\right\}^{1/2},
\end{equation*}
where $\Gamma(\mu,\nu)$ is the set of measures on $\R^2\times \R^2$ with marginals $\mu$ and $\nu$, i.e.,
\begin{multline*}
  \Gamma(\nu,\mu)=\left\{\gamma \in \mathcal{P}_2(\R \times \R), \forall \xi \in C_0(\R \times \R), \int_{\R^2} \xi(y_0) \gamma(dy_0,dy_1)=\int_{\R}\xi(y_0)\mu(dy_0),\right.\\
  \left.\int_{\R ^2} \xi(y_1) \gamma(dy_0,dy_1)=\int_{\R}\xi(y_1)\nu(dy_1)\right\}.
\end{multline*}
The Wasserstein distance $d_W$ takes a more pratical form in the one-dimensional setting. Indeed, in one space dimension, we have (see e.g \cite{Rachev,Villani1})
\begin{equation*}
 d_W(\nu,\mu)^2=\int_{0}^1 \abs{F_{\nu}^{-1}(z)-F_{\mu}^{-1}(z)}^2 dz, 
\end{equation*}
where $F_{\nu}^{-1}$ and $F_{\mu}^{-1}$ are the generalised inverse of cumulative distributions of $\nu$ and $\mu$, defined by
\begin{equation*}
 F_{\nu}^{-1}(z)=\text{inf}\Big\{x \in \R, \nu((-\infty,x))> z\Big\},\quad F_{\mu}^{-1}(z)=\text{inf}\Big\{x \in \R, \mu((-\infty,x))> z\Big\}.
\end{equation*}
This Wasserstein distance can be extended to the product space $\mathcal{P}_2(\R)\times  \mathcal{P}_2(\R)$. In the case at hand, we define $W_2(\nu,\mu)$ by
\begin{equation}\label{defW2}
 W_2(\nu,\mu)^2=\int_{0}^1 \abs{F_{\nu_1}^{-1}(z)-F_{\mu_1}^{-1}(z)}^2 dz +\frac{\chi_1 \theta_2}{\chi_2\theta_1}\int_{0}^1 \abs{F_{\nu_2}^{-1}(z)-F_{\mu_2}^{-1}(z)}^2 dz,
\end{equation}
where $\nu=\begin{pmatrix} \nu_1 \\ \nu_2 \end{pmatrix},\mu=\begin{pmatrix} \mu_1 \\ \mu_2 \end{pmatrix} \in \mathcal{P}_2(\R)\times \mathcal{P}_2(\R)$ and $F_{\nu_{\alpha}}^{-1}$, $F_{\mu_{\alpha}}^{-1}$ are 
the generalised inverse of cumulative distributions of  $\nu_{\alpha}$ and $\mu_{\alpha}$ for $\alpha=1,2$, respectively.
Using $W_2$ we prove a contraction inequality between duality solutions of \eqref{aggregation_eq}.
\begin{proposition}\label{uniq}
Let $\displaystyle \mu^{ini}=\begin{pmatrix} \mu_1^{ini} \\ \mu_2^{ini} \end{pmatrix}$ and $\displaystyle \nu^{ini}=\begin{pmatrix} \nu_1^{ini} \\ \nu_2^{ini}\end{pmatrix}$  be in $\mathcal{P}_2(\R)^2$.
We define $\displaystyle \mu=\begin{pmatrix} \mu_1 \\ \mu_2\end{pmatrix}$ and $\displaystyle \nu=\begin{pmatrix} \nu_1 \\ \nu_2\end{pmatrix}$ duality solutions of \eqref{aggregation_eq} with respectively the initial data $\mu^{ini},\nu^{ini}$.\\
Then $W_{2}(\mu,\nu)$ defined in \eqref{defW2} is bounded and satisfies the estimate:\\
\begin{equation*}
W_{2}(\mu,\nu) \leq W_{2}(\mu^{ini},\nu^{ini}) \exp{\left(2\lambda(\chi_1+\chi_2)(\theta_1+\theta_2) t\right)}.
\end{equation*}
\end{proposition}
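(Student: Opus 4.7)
The plan is to exploit the one-dimensional characterization of the Wasserstein distance \eqref{defW2}, which expresses $W_2(\mu,\nu)^2$ as the weighted $L^2([0,1])$ squared distance between pseudo-inverses. Setting $X_\alpha^\mu(t,z):=F_{\mu_\alpha(t)}^{-1}(z)$, $X_\alpha^\nu(t,z):=F_{\nu_\alpha(t)}^{-1}(z)$, and $D_\alpha(t,z):=X_\alpha^\mu(t,z)-X_\alpha^\nu(t,z)$, the idea is to show that both families of pseudo-inverses satisfy the characteristic ODE
$$\partial_t X_\alpha^\mu(t,z) = \chi_\alpha \sum_{\beta=1,2} \theta_\beta \int_0^1 \widehat{\partial_x K}\bigl(X_\alpha^\mu(t,z) - X_\beta^\mu(t,w)\bigr)\,dw,$$
which comes from the relation $\hat{a}(\rho)(x)=\int_{\R}\widehat{\partial_xK}(x-y)\rho(t,dy)$ combined with the push-forward $y=X_\beta^\mu(w)$, and analogously for $X_\alpha^\nu$. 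Then $W_2^2$ can be differentiated in time and bounded.

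Differentiating yields, for each $\alpha=1,2$,
$$\tfrac{1}{2}\tfrac{d}{dt}\!\int_0^1\! D_\alpha^2\,dz = \chi_\alpha \!\sum_\beta \theta_\beta\!\! \int\!\!\!\int_{[0,1]^2}\!\! D_\alpha(z)\Bigl[\widehat{\partial_x K}\bigl(X_\alpha^\mu(z){-}X_\beta^\mu(w)\bigr) - \widehat{\partial_x K}\bigl(X_\alpha^\nu(z){-}X_\beta^\nu(w)\bigr)\Bigr]dz\,dw.$$
I would then split into the diagonal contribution $\beta=\alpha$ and the cross contribution $\beta\neq\alpha$. For the diagonal term, swapping $z\leftrightarrow w$ and invoking the oddness of $\widehat{\partial_x K}$ (which follows from \ref{h2}) symmetrizes the integrand into $\tfrac{1}{2}(D_\alpha(z)-D_\alpha(w))$ times the bracket. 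Applying the $\lambda$-concavity estimate \eqref{Kchapobound} with $a:=X_\alpha^\mu(z)-X_\alpha^\mu(w)$, $b:=X_\alpha^\nu(z)-X_\alpha^\nu(w)$, so that $a-b=D_\alpha(z)-D_\alpha(w)$, gives an upper bound $\chi_\alpha\theta_\alpha\lambda \int_0^1 D_\alpha^2\,dz$.

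The cross terms are where the weight $\chi_1\theta_2/(\chi_2\theta_1)$ in \eqref{defW2} is tuned to play its role: once the $\alpha=2$ equation is multiplied by it, both cross contributions share the common prefactor $\chi_1\theta_2$. A further exchange $z\leftrightarrow w$ in one of them combined with the oddness of $\widehat{\partial_x K}$ merges them into the single symmetric expression
$$\chi_1\theta_2 \int\!\!\!\int_{[0,1]^2}\!\!\bigl(D_1(z) - D_2(w)\bigr)\Bigl[\widehat{\partial_x K}\bigl(X_1^\mu(z){-}X_2^\mu(w)\bigr) - \widehat{\partial_x K}\bigl(X_1^\nu(z){-}X_2^\nu(w)\bigr)\Bigr]dz\,dw.$$
Applying \eqref{Kchapobound} once more bounds this by $\chi_1\theta_2\lambda\int\!\!\int(D_1(z)-D_2(w))^2\,dz\,dw \leq 2\chi_1\theta_2\lambda \bigl(\|D_1\|_{L^2(0,1)}^2+\|D_2\|_{L^2(0,1)}^2\bigr)$. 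Collecting the diagonal and cross bounds and comparing coefficients against the weighted $W_2^2$, a crude majoration of the maximum by the sum of coefficients delivers $\tfrac{d}{dt}W_2^2 \leq 4\lambda(\chi_1+\chi_2)(\theta_1+\theta_2)\,W_2^2$, and Grönwall's lemma gives the claimed exponential estimate.

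The main obstacle will be the rigorous justification of the characteristic ODE for the pseudo-inverses and the time-differentiation of $W_2^2$ in a framework where $\rho_\alpha$ are merely measures and $\hat{a}(\rho)$ is only defined as a universal representative. The natural way around this is to carry out the whole computation first at the level of the regularised problem \eqref{macro_eq_eps_eps} with the smooth $K^{n,m}$, where Cauchy--Lipschitz supplies a classical flow and the ODE above is standard, obtaining the contraction for $W_{2,n,m}$. One then passes to the limit $m,n\to\infty$ using the weak-$*$ convergence established in Steps 1--2 of the proof of Theorem~\ref{Existence}, the uniform OSL control of Proposition~\ref{OSL_condition}, and the lower semicontinuity of $W_2$ under weak convergence of probability measures with uniform second moment. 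Uniqueness follows at once by applying the contraction estimate to two duality solutions with the same initial data.
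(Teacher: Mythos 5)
Your proof follows essentially the same route as the paper's: differentiate the weighted pseudo-inverse representation \eqref{defW2} of $W_2^2$ in time using $\partial_t F_\alpha^{-1}=\chi_\alpha\hat a(t,F_\alpha^{-1})$, symmetrise the diagonal and cross terms via the oddness of $\widehat{\partial_x K}$ (the weight $\chi_1\theta_2/(\chi_2\theta_1)$ equalising the cross prefactors exactly as you describe), apply \eqref{Kchapobound} and Young's inequality, and conclude by Gronwall. The only difference is that you explicitly flag and propose to justify the characteristic ODE for the pseudo-inverses by regularisation, a point the paper dispatches as ``straightforward and standard computations''; this is a sound and slightly more careful treatment of the same argument.
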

\begin{proof}
Since $(\mu,\nu) \in {\mathcal P}_2(\R)^2 \times {\mathcal P}_2(\R)^2$, $W_2(\mu,\nu)$ is bounded. For the sake of clarity in the proof, we denote
\begin{equation*}
 F_{\alpha}^{-1}:=F_{\nu_{\alpha}}^{-1},\quad G_{\alpha}^{-1}:=F_{\mu_{\alpha}}^{-1},\quad \text{for }\alpha=1,2.
\end{equation*}
We also omit the argument $t$ in notations $F^{-1}_\alpha(t,x)$ and $G^{-1}_\alpha(t,x)$. Computing the derivative of $W_{2}(\mu,\nu)^2$ with respect to time,
\begin{equation*}
\begin{aligned}
  \partial_t W_{2}(\mu,\nu)^2&=2\int_0^1 \big(F_1^{-1}(x)-G_1^{-1}(x)\big)\big(\partial_t F_1^{-1}(x)-\partial_t G_1^{-1}(x)\big)dx\\
 & +2\frac{\chi_1 \theta_2}{\chi_2 \theta_1}\int_0^1 \big(F_2^{-1}(x)-G_2^{-1}(x)\big)\big(\partial_t F_2^{-1}(x)-\partial_t G_2^{-1}(x)\big) dx.
\end{aligned}
\end{equation*}
Straightforward and standard computations give that
\begin{equation*}
 \partial_t F_{\alpha}^{-1}(x)=\chi_{\alpha} \hat{a}(t,F_{\alpha}^{-1}(x)),\quad \partial_t G_{\alpha}^{-1}=\chi_{\alpha}\hat{a}(t,G_{\alpha}^{-1}(x)),\quad \text{for }\alpha=1,2.
\end{equation*}
From the definition of $\hat{a}$ in \eqref{macro_velocity}, we get
\begin{equation*}
 \partial_t F_1^{-1}(x)=\chi_1 \theta_1\int_0^1 \widehat{\partial_x K}(F_1^{-1}(x)-z)\mu_1(t,dz)+ \chi_1 \theta_2 \int_0^1 \widehat{\partial_x K}(F_1^{-1}(x)-z)\mu_2(t,dz).
\end{equation*}
Setting $z=F_1^{-1}(y)$ in the first integral and $z=F_2^{-1}(y)$ in the second one yields 
\begin{equation*}
 \partial_t F_1^{-1}(x)=\chi_1 \theta_1 \int_0^1 \widehat{\partial_x K}(F_1^{-1}(x)-F_1^{-1}(y))dy+ \chi_1\theta_2 \int_0^1 \widehat{\partial_x K}(F_1^{-1}(x)-F_2^{-1}(y))dy.
\end{equation*}
Similarly, we get
\begin{equation*}
 \partial_t G_1^{-1}(x)=\chi_1 \theta_1 \int_0^1 \widehat{\partial_x K}(G_1^{-1}(x)-G_1^{-1}(y))dy+ \chi_1\theta_2 \int_0^1 \widehat{\partial_x K}(G_1^{-1}(x)-G_2^{-1}(y))dy.
\end{equation*}
Using the oddness of $\partial_x K$, we can symmetrise the terms in the right-hand side of $\partial_t F_1^{-1}$, $\partial_t G_1^{-1}$. One gets
\begin{multline*}
  \int_0^1 \big(F_1^{-1}(x)-G_1^{-1}(x)\big)\big(\partial_t F_1^{-1}(x)-\partial_t G_1^{-1}(x)\big)dx=\\
   \frac{1}{2} \chi_1\theta_1 \int_0^1 \int_0^1 \left(\widehat{\partial_x K}(F_1^{-1}(x)-F_1^{-1}(y))-\widehat{\partial_x K}(G_1^{-1}(x)-G_1^{-1}(y))\right) \times\\
   \left(F_1^{-1}(x)-G_1^{-1}(x)-\big(F_1^{-1}(y)-G_1^{-1}(y) \big)\right)dx \, dy\\
  +\chi_1\theta_2 \int_0^1 \int_0^1\left(\widehat{\partial_x K}(F_1^{-1}(x)-F_2^{-1}(y))-\widehat{\partial_x K}(G_1^{-1}(x)-G_2^{-1}(y))\right)\big(F_1^{-1}(x)-G_1^{-1}(x)\big)dy\,dx.\\
\end{multline*}
Similar computations can be carried out for $\int_0^1 \big(F_2^{-1}(t,x)-G_2^{-1}(t,x)\big)\big(\partial_t F_2^{-1}(t,x)-\partial_t G_2^{-1}(t,x)\big)$. 
Finally, $\partial_t W_2(\nu,\mu)^2$ reads
\begin{equation}\label{derivative_W}
 \begin{aligned}
  \partial_t W_2(\nu,\mu)^2&=
 \chi_1 \theta_1 \int_0^1 \int_0^1 \left(\widehat{\partial_x K}(F_1^{-1}(x)-F_1^{-1}(y))-\widehat{\partial_x K}(G_1^{-1}(x)-G_1^{-1}(y))\right) \times\\
   &\left(F_1^{-1}(x)-G_1^{-1}(x)-\big(F_1^{-1}(y)-G_1^{-1}(y) \big)\right)dx \, dy\\
 &+\frac{\chi_1 \theta_2^2}{\theta_1} \int_0^1 \int_0^1 \left(\widehat{\partial_x K}(F_2^{-1}(x)-F_2^{-1}(y))-\widehat{\partial_x K}(G_2^{-1}(x)-G_2^{-1}(y))\right) \times\\
  & \left(F_2^{-1}(x)-G_2^{-1}(x)-\big(F_2^{-1}(y)-G_2^{-1}(y) \big)\right)dx \, dy\\
 &+ 2\chi_1 \theta_2 \int_0^1 \int_0^1 \left(\widehat{\partial_x K}(F_1^{-1}(x)-F_2^{-1}(y))-\widehat{\partial_x K}(G_1^{-1}(x)-G_2^{-1}(y))\right) \times\\
   &\left(F_1^{-1}(x)-F_2^{-1}(y)-\big(G_1^{-1}(x)-G_2^{-1}(y) \big)\right)dx \, dy.\\
 \end{aligned}
\end{equation}
Applying inequality \eqref{Kchapobound} to \eqref{derivative_W} and using Young's inequality yields
\begin{equation*}
  \partial_t W_2(\nu,\mu)^2 \leq 4 \chi_1 \lambda \times \left((\theta_1+\theta_2)\int_0^1 (F_1^{-1}(x)-G_1^{-1}(x))^2 dx+(\theta_2+\frac{\theta_2^2}{\theta_1})\int_0^1 (F_2^{-1}(x)-G_2^{-1}(x))^2 dx\right).
\end{equation*}
By definition of $W_2$ \eqref{defW2}, we conclude
\begin{equation*}
\partial_t W_2(\nu,\mu)^2\leq 4\lambda (\chi_1+\chi_2)(\theta_1+\theta_2)W_2(\nu,\mu)^2.
\end{equation*}
Then the result follows from Gronwall's Lemma.
\end{proof}

{\bf Proof of uniqueness.}
From Proposition~\ref{uniq}, it is clear that if $\mu^{ini}=\nu^{ini}$, then $\mu=\nu$. 
We deduce uniqueness of duality solution in Theorem~\ref{Existence}.

\subsection{Equivalence with gradient flow}
We recall that $\mu \in AC^2([0,T],\mathcal{P}_2(\R) \times \mathcal{P}_2(\R))$ if $\mu$ is locally H\"{o}lder continuous of exponent $1/2$ with respect to the Wasserstein distance $W_2$ in $\mathcal{P}_2(\R) \times \mathcal{P}_2(\R)$.
\begin{proposition}
 Let assumptions of Theorem~\ref{Existence} hold. 
 Given $\rho^{ini}=\begin{pmatrix}\rho_1^{ini} \\ \rho_2^{ini} \end{pmatrix} \in \mathcal{P}_2(\R) \times \mathcal{P}_2(\R) $. Let $\rho=\begin{pmatrix}\rho_1 \\ \rho_2 \end{pmatrix}$ and $\tilde{\rho}=\begin{pmatrix}\tilde{\rho}_1 \\ \tilde{\rho}_2 \end{pmatrix}$
 be respectively the duality and gradient flow solution. Then, we have $\rho \in AC^{2}([0,T],\mathcal{P}_2(\R)\times \mathcal{P}_2(\R))$ and $\rho=\tilde{\rho}$.
\end{proposition}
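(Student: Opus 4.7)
The plan is to reduce the equivalence to the uniqueness argument already established in Proposition~\ref{uniq}. Since Proposition~\ref{uniq} yields a Gronwall estimate on $W_2$ between any two duality solutions, it suffices to show that the gradient flow solution $\tilde{\rho}$ of \cite{Francesco} is itself a duality solution in the sense of Definition~\ref{defdual}; then $\rho = \tilde{\rho}$ follows immediately since they share the same initial datum.

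First I would establish the time regularity $\rho \in AC^2([0,T], \mathcal{P}_2(\R)\times\mathcal{P}_2(\R))$. From Theorem~\ref{Existence} we know $\partial_t \rho_\alpha + \chi_\alpha\partial_x(\hat{a}(\rho)\rho_\alpha) = 0$ holds in the distributional sense, with $\|\hat{a}(\rho)\|_{L^\infty} \leq \|\partial_x K\|_{L^\infty}$ by hypothesis \ref{h3}. By duality, the pseudo-inverse $F^{-1}_\alpha(t,\cdot)$ of the cumulative distribution function of $\rho_\alpha(t)$ satisfies
\begin{equation*}
\partial_t F_\alpha^{-1}(t,z) = \chi_\alpha \hat{a}(t, F_\alpha^{-1}(t,z)), \qquad z \in (0,1),
\end{equation*}
which gives $|F_\alpha^{-1}(t,z) - F_\alpha^{-1}(s,z)| \leq \chi_\alpha \|\partial_x K\|_{L^\infty}|t-s|$ pointwise, and hence a Lipschitz-in-time (a fortiori $1/2$-H\"older) bound for $W_2(\rho(t),\rho(s))$ via the one-dimensional identity $W_2^2(\mu,\nu) = \int_0^1 |F_\mu^{-1}-F_\nu^{-1}|^2\,dz$.

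Next I would verify that $\tilde{\rho}$ is a duality solution. The gradient flow constructed in \cite{Francesco} is characterised as a curve in $AC^2([0,T], \mathcal{P}_2(\R)^2)$ satisfying the continuity equation with tangent velocity lying in the subdifferential of the interaction energy
\begin{equation*}
\mathcal{E}[\rho_1,\rho_2] = -\tfrac{1}{2}\int_{\R^2} K(x-y)\,\rho(dx)\,\rho(dy), \qquad \rho = \theta_1\rho_1+\theta_2\rho_2.
\end{equation*}
For the $\lambda$-concave pointy potential $K$, the analysis in \cite{Francesco} identifies this subdifferential velocity precisely with $-\chi_\alpha (\widehat{\partial_x K}\ast \tilde{\rho})$, so that $\tilde{\rho}_\alpha$ satisfies $\partial_t \tilde{\rho}_\alpha + \chi_\alpha \partial_x(\hat{a}(\tilde{\rho})\tilde{\rho}_\alpha) = 0$ in the distributional sense. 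Since $\hat{a}(\tilde{\rho})$ enjoys the OSL bound from Proposition~\ref{OSL_condition} (applied to the fixed measure $\tilde{\rho}$), Theorem~\ref{JBouchut} (1) applied to the linear transport problem with frozen velocity $\hat{a}(\tilde{\rho})$ produces a unique duality solution with the prescribed initial datum; because distributional solutions with an OSL velocity can be identified with duality solutions (again by Theorem~\ref{JBouchut}), $\tilde{\rho}$ is indeed a duality solution of \eqref{aggregation_eq}.

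Finally, applying Proposition~\ref{uniq} to the pair $(\rho,\tilde{\rho})$ with identical initial datum $\rho^{ini}$ gives $W_2(\rho(t),\tilde{\rho}(t)) \leq W_2(\rho^{ini},\rho^{ini}) e^{2\lambda(\chi_1+\chi_2)(\theta_1+\theta_2)t} = 0$, so $\rho_\alpha(t) = \tilde{\rho}_\alpha(t)$ for every $t \in [0,T]$ and $\alpha = 1,2$. The main obstacle is the second paragraph: identifying the gradient flow velocity with $\hat{a}(\tilde{\rho})$ a.e.~and certifying that the distributional equation satisfied by $\tilde{\rho}$ lifts to the duality formulation. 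This requires invoking the subdifferential computation from \cite{Francesco} for the pointy, nonsmooth kernel, where the correct selection at the diagonal is exactly the one prescribed by $\widehat{\partial_x K}$.
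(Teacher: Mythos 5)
Your proposal is correct and follows the same overall strategy as the paper: identify the gradient-flow velocity of \cite{Francesco} with $\hat a(\tilde\rho)$, and then conclude $\rho=\tilde\rho$ from the Wasserstein contraction of Proposition~\ref{uniq}. Where you differ is in the two supporting steps. For the $AC^2$ regularity, the paper simply observes that $\hat a(\rho)\in L^1([0,T],L^2(\rho_1\otimes\rho_2,\R^2))$ because $\partial_xK$ is bounded, and invokes Theorem 8.3.1 of Ambrosio--Gigli--Savar\'e; your route through the Lipschitz-in-time bound $|F_\alpha^{-1}(t,z)-F_\alpha^{-1}(s,z)|\le\chi_\alpha\|\partial_xK\|_{L^\infty}|t-s|$ on the pseudo-inverses is more elementary and exploits the one-dimensional setting, at the price of relying on the characteristic ODE $\partial_tF_\alpha^{-1}=\chi_\alpha\hat a(t,F_\alpha^{-1})$ that the paper also uses (without detailed justification) in the proof of Proposition~\ref{uniq}. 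One small imprecision: you attribute to Theorem~\ref{JBouchut} the statement that a nonnegative distributional solution with an OSL velocity is a duality solution; as recalled in the paper, that theorem gives existence, the universal representative, and weak stability, but not this converse identification (which is a separate result of Bouchut--James). This does not damage your conclusion, since the contraction estimate of Proposition~\ref{uniq} is proved using only the pseudo-inverse ODE, which follows from the distributional formulation of \eqref{distributional_eq}--\eqref{macro_velocity} satisfied by $\tilde\rho$; this is in fact exactly how the paper closes the argument (``$\tilde\rho$ verifies \eqref{distributional_eq}--\eqref{macro_velocity}, hence by uniqueness $\rho=\tilde\rho$''), and the paper additionally records the reverse implication (the duality solution is the gradient flow, by uniqueness of gradient flow solutions), which your write-up omits but which is not needed for the stated conclusion.
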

\begin{proof}
We have that $\begin{pmatrix}\hat{a}(\rho)\\\hat{a}(\rho)\end{pmatrix} \in L^1([0,T],L^2(\rho_1 \otimes \rho_2,\R^2))$.
This comes from the fact that $\partial_x K$ is bounded and 
 \begin{equation*}
 \abs{\int_\RR \widehat{\partial_x K} (x-y)(\theta_1 \rho_1(t,dy)+\theta_2 \rho_2(t,dy))} \leq \|\partial_x K\|_{L^{\infty}} (\theta_1+\theta_2).
 \end{equation*}
From Theorem 8.3.1 in \cite{Ambrosio}, we deduce that $\rho \in AC^{2}([0,T],\mathcal{P}_2(\R)\times \mathcal{P}_2(\R))$.
Since $\rho$ satisfies \eqref{distributional_eq} in the distributional sense, we deduce by uniqueness of such solution that $\tilde{\rho}$ is a gradient flow solution.

Conversely, we suppose that $\tilde{\rho}$ is a gradient flow solution, we have that $\rho \in C([0,T],\mathcal{P}_2(\R) \times \mathcal{P}_2(\R))$ and $\rho$ verifies
\eqref{distributional_eq}--\eqref{macro_velocity}. By uniqueness of the solution in Theorem~\ref{Existence}, we deduce that $\rho=\tilde{\rho}$.
\end{proof}

\section{Convergence for the kinetic model}
The convergence of the kinetic model \eqref{eq_cin_1d} towards the aggregation model is analysed in this section.
\begin{proof}[Proof of Theorem \ref{theo1.3}]
 From the assumption $\chi^{\alpha}(\theta_1+\theta_2)<1$ for $\alpha=1,2$, we obtain that $T_{\alpha}[S]$ defined in \eqref{Turning} is positive. 
Since $T_{\alpha}[S]$ is a bounded and Lipschitz continuous function, we get the global in time existence of solutions to
\eqref{eq_cin_1d} and we have that $f_{\alpha}^{\varepsilon}\in C([0,T],L^{\infty} \cap L_1^{+}(\R))$ and $\int x^2 f_{\alpha}^{\varepsilon} dx  <\infty$.\\
To prove the convergence result stated in Theorem \ref{theo1.3}, we consider the zeroth and first order moments of the distribution $f_\alpha ^{\varepsilon}(x,v,t)$ introduced previously.
\begin{equation*}
\rho_{\alpha}^{\varepsilon}:=f_{\alpha}^{\varepsilon}(1)+f_\alpha ^{\varepsilon}(-1),\quad J_{\alpha}^{\varepsilon}:=(f_{\alpha}^{\varepsilon}(1)-f_\alpha ^{\varepsilon}(-1)),\quad \text{for }\alpha=1,2.
\end{equation*}
From \eqref{eq_cin_1d}, these moments satisfy the following equations
\begin{equation}\label{sp}
 \begin{aligned}
 & \partial_{t} \rho_{\alpha}^{\varepsilon} + \partial_{x}J_{\alpha}^{\varepsilon}=0,\\
 & \partial_{t} J_{\alpha}^{\varepsilon} +\partial_{x}\rho_{\alpha}^{\varepsilon}=\frac{2}{\varepsilon} (\chi_{\alpha}\partial_{x}S^{\varepsilon}\rho_{\alpha}^{\varepsilon}- J_{\alpha}^{\varepsilon}),\quad \text{for }\alpha=1,2.
 \end{aligned}
\end{equation}
From the first equation of \eqref{sp}, we deduce that $\forall t \in [0,T]$, $\abs{\rho_{\alpha}^{\varepsilon}}(t,\R)= \abs{\rho_{\alpha}^{ini}}(\R)$. 
Therefore, for all $ t \in [0,T]$ the sequence $(\rho_{\alpha}^{\varepsilon}(t,\cdot))_{\varepsilon}$ is relatively compact in $\mathcal{M}_{b}(\mathbb{R}) - \sigma(\mathcal{M}_{b}(\mathbb{R}), C_{0}^0(\mathbb{R}))$. 
Since $J_{\alpha}^{\varepsilon}$ is uniformly bounded in $C^0([0,T],L^1(\R))$, using the same token as in the proof of the existence, there exists $\rho_{\alpha} \in \mathcal{S}_\mathcal{M}$  such that
\begin{equation*}
 \rho_{\alpha}^{\varepsilon} \rightharpoonup  \rho_{\alpha} \quad \text{weakly in } \mathcal{S}_\mathcal{M} ,\quad \text{for } \alpha=1, 2. 
\end{equation*}
From the second equation of \eqref{sp}, we have 
\begin{equation*}
\begin{aligned}
  J_{\alpha}^{\varepsilon}&=\chi_{\alpha}\partial_{x}S^{\varepsilon}\rho_{\alpha}^{\varepsilon}-\frac{\varepsilon}{2} \left(\partial_{t} J_{\alpha}^{\varepsilon} + \partial_{x}\rho_{\alpha}^{\varepsilon}\right),\quad \text{in the distributional sense} \\
                   &:=A^{\varepsilon}+R^{\varepsilon}.
\end{aligned}
\end{equation*}
We have that $R^{\varepsilon}$ converges weakly to zero in  the sense of distributions. From Lemma~\ref{technical_lemma}, one obtains
\begin{equation*}
 \int_0^T \int_\R \hat{a} (\theta_1 \rho_1^{\varepsilon}+\theta_2 \rho_2^{\varepsilon})\phi(t,x)\rho_{\alpha}^{\varepsilon}(t,dx)dt \rightarrow \int_0^T\int_{\R} \hat{a}(\theta_1\rho_1+\theta_2\rho_2)\phi(t,x)\rho_{\alpha}(t,dx)dt.
\end{equation*}
We conclude that 
\begin{equation*}
 J_{\alpha}^{\varepsilon} \rightharpoonup \chi_{\alpha} \hat{a}(\theta_1 \rho_1+\theta_2 \rho_2) \rho_{\alpha} \quad \text{in the sense of distributions}.
\end{equation*}
Passing to the limit in the first equation of \eqref{sp}, we deduce that $\rho_{\alpha}$ satisfies \eqref{distributional_eq} in the sense of distributions. We use uniqueness of duality solutions to conclude the proof.
\end{proof}

\section{Numerical simulations}
This section is devoted to the numerical simulation of system \eqref{distributional_eq}. We provide a numerical scheme which preserves basic properties of the system such as 
positivity, conservation of mass for each species and conservation of the weighted center of mass. Moreover, we prove the convergence of the numerical approximation towards the duality solution defined in Theorem~\ref{Existence}.

\subsection{Numerical scheme and properties}
Let us consider a cartesian grid of time step $\Delta t$ and space step $\Delta x$. We denote $x_j=j\Delta x, j \in \Z,t^n=n\Delta t,n\in \N$.
An approximation of $\rho_{\alpha}(t^n,x_j)$ denoted $\rho_{\alpha,j}^n$ is computed by using a finite volume approach where the flux $F_{\alpha,j-1/2}^n$ is given by the flux vector splitting method (see \cite{James}).
Assuming that $(\rho_{\alpha,j}^n)$ are known at time $t^n$, we compute $\rho_{\alpha,j}^{n+1}$ by the scheme:
\begin{equation}\label{numerical_scheme}
\left \{
 \begin{aligned}
 &  \frac{\rho_{\alpha,j}^{n+1}-\rho_{\alpha,j}^n}{\Delta t}+\frac{F_{\alpha,j+1/2}^n-F_{\alpha,j-1/2}^n}{\Delta x}=0\quad \text{for }\alpha=1, 2 \text{ and } j\in \Z,\\
 &  F_{\alpha,j-1/2}^n=(\hat{a}_{j-1}^n)^{+}\rho_{\alpha, j-1}^n+(\hat{a}_j^n)^{-}\rho_{\alpha,j}^n,\\
 & \hat{a}_j^n=\sum_{i \neq j} \partial_x K(x_j-x_i)\left(\theta_1\rho_{1,i}^n+\theta_2\rho_{2,i}^n\right),
 \end{aligned}
 \right.
\end{equation}
where $(\cdot)^{+} := \max\{(\cdot), 0\}$,  $(\cdot)^{-} := \min\{(\cdot), 0\}$ are respectively the positive and negative part of $(\cdot)$. Then we reconstruct 
$$
\rho_{\alpha, \Delta x}(t,x)=\sum_{n=0}^{N_t-1} \sum_{j\in \Z}\rho_{\alpha,j}^n \mathds{1}_{[t^n,t^{n+1}[}(t)\delta_{x_j}(x),
$$
where $\delta_{x_j}$ is the Dirac delta function at $x_j=j\Delta x$.
We first verify that this scheme allows the conservation of the mass and of the weighted center of mass.
\begin{proposition}
Let us consider $(\rho_{1}^{ini},\rho_2^{ini}) \in \mathcal{P}_2(\R)^2$ such that for $\alpha=1,2$, $\rho_{\alpha}^{ini}=\sum_{j\in\Z} \rho_{\alpha,j}^{0} \delta_{x_j}$.
We assume that for $n \in \N^*$, $(\rho_{\alpha,j}^n)_{j,n}$ are given by the numerical scheme \eqref{numerical_scheme}. Then the conservation of the mass of each species and of the weighted center of mass hold:
\begin{equation}\label{mass_conservation}
\forall n\in \N,\quad \sum_{j \in \mathbb{Z}} \rho_{\alpha,j}^{n+1}=\sum_{j \in \mathbb{Z}} \rho_{\alpha,j}^{n}\quad \text{for } \alpha=1, 2,
\end{equation}
\begin{equation}\label{center_conservation}
\forall n \in \N,\quad \frac{\theta_1}{\chi_1} \sum_{j \in \mathbb{Z}}x_j\rho_{1,j}^{n+1}+\frac{\theta_2}{\chi_2} \sum_{j \in \mathbb{Z}}x_j\rho_{2,j}^{n+1}=\frac{\theta_1}{\chi_1} \sum_{j \in \mathbb{Z}}x_j\rho_{1,j}^{n}+\frac{\theta_2}{\chi_2} \sum_{j \in \mathbb{Z}}x_j\rho_{2,j}^{n}.
\end{equation}
\end{proposition}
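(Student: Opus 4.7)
The plan is to exploit the conservative form of the scheme \eqref{numerical_scheme} together with a discrete integration by parts (Abel summation), and to use the oddness of $\partial_x K$ which follows from \ref{h2}. All the infinite sums that appear below will be absolutely convergent: this is clear at time $n=0$ (finite mass and finite first moment coming from $\rho_\alpha^{ini}\in\mathcal{P}_2(\R)$), and it is inherited from one step to the next because $\hat a_j^n$ is uniformly bounded by $\|\partial_x K\|_{L^\infty}(\theta_1+\theta_2)$ and one step of the scheme couples only three adjacent cells. I take this bookkeeping for granted, so that the discrete integration by parts produces no boundary contribution at infinity.

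For \eqref{mass_conservation}, I simply sum the first line of \eqref{numerical_scheme} over $j\in\Z$: the conservative form makes the flux differences telescope to zero, yielding $\sum_j\rho_{\alpha,j}^{n+1}=\sum_j\rho_{\alpha,j}^n$.

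For \eqref{center_conservation}, I multiply \eqref{numerical_scheme} by $x_j$ and sum over $j$. Abel summation converts
$$\sum_{j\in\Z} x_j\bigl(F_{\alpha,j+1/2}^n-F_{\alpha,j-1/2}^n\bigr)=-\Delta x\sum_{j\in\Z}F_{\alpha,j+1/2}^n.$$
A shift of index in the term carrying $(\hat a_{j+1}^n)^{-}\rho_{\alpha,j+1}^n$, combined with the elementary identity $u^{+}+u^{-}=u$, then gives
$$\sum_j F_{\alpha,j+1/2}^n=\chi_\alpha\sum_j \hat a_j^n\,\rho_{\alpha,j}^n,$$
where the factor $\chi_\alpha$ is the species-specific chemosensitivity implicit in the upwinded flux since the PDE being discretised is $\pa_t\rho_\alpha+\chi_\alpha\pa_x(\hat a\rho_\alpha)=0$. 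Therefore
$$\frac{\theta_\alpha}{\chi_\alpha}\sum_j x_j\bigl(\rho_{\alpha,j}^{n+1}-\rho_{\alpha,j}^n\bigr)=\Delta t\,\theta_\alpha\sum_j \hat a_j^n\,\rho_{\alpha,j}^n.$$
Adding the two identities obtained for $\alpha=1,2$ and inserting the definition of $\hat a_j^n$ from \eqref{numerical_scheme}, with the shorthand $\rho_j^n:=\theta_1\rho_{1,j}^n+\theta_2\rho_{2,j}^n$, the right-hand side becomes
$$\Delta t\sum_{\substack{i,j\in\Z\\ i\neq j}}\partial_x K(x_j-x_i)\,\rho_i^n\,\rho_j^n.$$

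The decisive step is that $\partial_x K$ is odd by \ref{h2}, so this double sum is antisymmetric under the swap $i\leftrightarrow j$ and hence equals its own negative, i.e.\ it vanishes; this proves \eqref{center_conservation}. The only delicate point is the summability bookkeeping mentioned at the start; the algebraic structure of the argument is exactly the discrete counterpart of the continuous computation for $\frac{d}{dt}x_c$ noted in the remark after Theorem~\ref{Existence}.
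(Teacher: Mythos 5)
Your proof is correct and follows essentially the same route as the paper's: telescoping for the mass, then multiplication by $x_j$, discrete integration by parts combining $(\hat a_j^n)^+ + (\hat a_j^n)^- = \hat a_j^n$, and cancellation of the resulting double sum by the oddness of $\partial_x K$ under the swap $i\leftrightarrow j$. You also correctly spotted that the flux in \eqref{numerical_scheme} must carry the factor $\chi_\alpha$ for the weighted center of mass to be conserved — the paper's own computation silently assumes this as well.
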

\begin{proof}
 Identity \eqref{mass_conservation} can be obtained directly by summing over $j \in \Z$ the first equation in  \eqref{numerical_scheme}.

We now show \eqref{center_conservation}. Multiplying by $x_j$ the first equation in \eqref{numerical_scheme} and summing over $j\in \Z$, one gets 
\begin{multline*}
  \frac{1}{\chi_{\alpha}} \sum_{j \in \mathbb{Z}}x_j\rho_{\alpha,j}^{n+1}=\frac{1}{\chi_{\alpha}} \sum_{j \in \mathbb{Z}}x_j\rho_{\alpha,j}^{n}-\frac{\Delta t}{\Delta x} \sum_{j \in \mathbb{Z}}x_j(\hat{a}_{j}^n)^{+}\rho_{\alpha,j}^n
  +\frac{\Delta t}{\Delta x} \sum_{j \in \mathbb{Z}}x_j(\hat{a}_{j-1}^n)^{+}\rho_{\alpha,j-1}^n\\
  +\frac{\Delta t}{\Delta x} \sum_{j \in \mathbb{Z}}x_j(\hat{a}_{j}^n)^{-}\rho_{\alpha,j}^n-\frac{\Delta t}{\Delta x} \sum_{j \in \mathbb{Z}}x_j(\hat{a}_{j+1}^n)^{-}\rho_{\alpha,j+1}^n,\quad \text{for } \alpha=1,2.
\end{multline*}
Using a discrete integration by parts, one gets
\begin{equation*}
 \frac{1}{\chi_{\alpha}} \sum_{j \in \mathbb{Z}}x_j\rho_{\alpha,j}^{n+1}=\frac{1}{\chi_{\alpha}} \sum_{j \in \mathbb{Z }}x_j\rho_{\alpha,j}^{n}+\Delta t \sum_{j \in \mathbb{Z}}\left((\hat{a}_{j}^n)^{+}+(\hat{a}_{j}^n)^{-}\right)\rho_{\alpha,j}^n=\frac{1}{\chi_{\alpha}} \sum_{j \in \mathbb{Z }}x_j\rho_{\alpha,j}^{n}+\Delta t \sum_{j \in \mathbb{Z}}\hat{a}_{j}^n \rho_{\alpha,j}^n.
\end{equation*}
Finally, we get 
\begin{equation*}
 \frac{\theta_1}{\chi_1} \sum_{j \in \mathbb{Z}}x_j\rho_{1,j}^{n+1}+ \frac{\theta_2}{\chi_2} \sum_{j \in \mathbb{Z}}x_j\rho_{2,j}^{n+1}= \frac{\theta_1}{\chi_1} \sum_{j \in \mathbb{Z}}x_j\rho_{1,j}^{n}+ \frac{\theta_2}{\chi_2} \sum_{j \in \mathbb{Z}}x_j\rho_{2,j}^{n}+\Delta t \sum_{j \in \mathbb{Z}}\hat{a}_{j}^n\left(\theta_1\rho_{1,j}^n+\theta_2\rho_{2,j}^n\right).
\end{equation*}
From \eqref{numerical_scheme}, we have that 
\begin{equation*}
 \sum_{j \in \mathbb{Z}}\hat{a}_{j}^n\left(\theta_1\rho_{1,j}^n+\theta_2\rho_{2,j}^n\right)=\sum_{ i\neq j} \partial_x K(x_j-x_i)\left(\theta_1\rho_{1,j}^n+\theta_2\rho_{2,j}^n\right)\left(\theta_1\rho_{1,i}^n+ \theta_2\rho_{2,i}^n\right).
\end{equation*}
Swapping indices $i$ and $j$ and using the oddness of $\partial_x K$ yields
\begin{equation*}
 \sum_{j \in \mathbb{Z}}\hat{a}_{j}^n\left(\theta_1\rho_{1,j}^n+\theta_2\rho_{2,j}^n\right)=0.
\end{equation*}
Then \eqref{center_conservation} follows.
\end{proof}

\begin{lemma}\label{positivity}
Let $(\rho_{1}^{ini},\rho_{2}^{ini})$ be in $\mathcal{P}_2(\R)^2$ such that $\rho_{\alpha}^{ini}=\sum_{j\in\Z} \rho_{\alpha,j}^{0} \delta_{x_j}$ with $\sum_{j\in\Z} \rho_{\alpha,j}^{0}=1$ and $\rho_{\alpha,j}\geq 0$ for $\alpha=1,2$. Assuming that for $n\in \N^*$, $(\rho_{\alpha,j}^{n})_{j,n}$ are given by the numerical scheme \eqref{numerical_scheme}.
If the following CFL condition holds
\begin{equation}\label{CFL}
\|\partial_x K\|_{L^{\infty}}(\theta_1+\theta_2) \frac{\Delta t}{\Delta x}<1,
\end{equation}
Then for all $n\in \N$, $\rho_{\alpha,j}^n\geq 0$ and we have $\sup_{j,n} \abs{\hat{a}_j^n} \leq \|\partial_x K\|_{L^{\infty}}(\theta_1+\theta_2)$.
\end{lemma}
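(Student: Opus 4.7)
The plan is a joint induction on $n$ establishing simultaneously the positivity $\rho_{\alpha,j}^n\geq 0$ and the $\ell^1$-normalisation $\sum_{j\in\Z}\rho_{\alpha,j}^n=1$ for $\alpha=1,2$, from which the bound on $\hat{a}_j^n$ will follow as a by-product. The base case $n=0$ holds by the assumption on the initial data.

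For the inductive step, assume the two properties at time $t^n$. Then by a trivial estimate on the velocity,
\begin{equation*}
\abs{\hat{a}_j^n} \;\leq\; \|\partial_x K\|_{L^\infty}\Bigl(\theta_1\sum_{i\in\Z}\rho_{1,i}^n + \theta_2\sum_{i\in\Z}\rho_{2,i}^n\Bigr) \;=\; \|\partial_x K\|_{L^\infty}(\theta_1+\theta_2),
\end{equation*}
which is the second claim of the lemma. The key identity is then to rewrite the scheme \eqref{numerical_scheme} in an explicit three-point form: substituting $F_{\alpha,j\pm 1/2}^n$ and regrouping,
\begin{equation*}
\rho_{\alpha,j}^{n+1} \;=\; \Bigl(1 - \tfrac{\Delta t}{\Delta x}\bigl((\hat{a}_j^n)^+ - (\hat{a}_j^n)^-\bigr)\Bigr)\rho_{\alpha,j}^n \;+\; \tfrac{\Delta t}{\Delta x}(\hat{a}_{j-1}^n)^+\rho_{\alpha,j-1}^n \;-\; \tfrac{\Delta t}{\Delta x}(\hat{a}_{j+1}^n)^-\rho_{\alpha,j+1}^n.
\end{equation*}
The last two terms are nonnegative under the induction hypothesis since $(\cdot)^+\geq 0$ and $(\cdot)^-\leq 0$. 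The diagonal coefficient equals $1 - \tfrac{\Delta t}{\Delta x}\abs{\hat{a}_j^n}$, which by the bound above is at least $1 - \tfrac{\Delta t}{\Delta x}\|\partial_x K\|_{L^\infty}(\theta_1+\theta_2) > 0$ under \eqref{CFL}. This yields $\rho_{\alpha,j}^{n+1}\geq 0$.

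Finally, the normalisation $\sum_j\rho_{\alpha,j}^{n+1}=1$ is exactly the mass conservation identity \eqref{mass_conservation} proved in the preceding proposition, closing the induction. There is no real obstacle here; the only point requiring attention is to note that the splitting of the flux into $(\hat{a}^n)^{\pm}$-parts is precisely what is needed to display the updated value $\rho_{\alpha,j}^{n+1}$ as a convex-type combination with nonnegative off-diagonal contributions, so that the CFL condition suffices to keep the diagonal coefficient nonnegative.
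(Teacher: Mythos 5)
Your proof is correct and follows essentially the same route as the paper: the same three-point rewriting of the scheme (the paper's equation \eqref{eq_rho_n}), the same observation that the CFL condition makes the diagonal coefficient $1-\tfrac{\Delta t}{\Delta x}\abs{\hat a_j^n}$ nonnegative, and the same use of mass conservation to propagate the velocity bound. The only (immaterial) difference is that you carry the $\ell^1$-normalisation in the induction hypothesis and deduce the bound on $\hat a_j^n$ from it, whereas the paper carries the bound on $\hat a_j^n$ directly and re-derives it at the end of each step.
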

\begin{proof}
This result is proved by induction. Let us assume that at time $n$, for all $j\in \Z$, $\rho_{\alpha,j}^n$ is positive and $\sup_{j} \abs{\hat{a}_j^n} \leq \|\partial_x K\|_{L^{\infty}}(\theta_1+\theta_2)$.
From \eqref{numerical_scheme}, it follows that
\begin{equation}\label{eq_rho_n}
 \rho_{\alpha,j}^{n+1}=\left(1-\frac{\Delta t}{\Delta x}\abs{\hat{a}_j^n}\right)\rho_{\alpha,j}^n +\frac{\Delta t}{\Delta x} (\hat{a}_{j-1}^n)^{+}\rho_{\alpha, j-1}^n-\frac{\Delta t}{\Delta x}(\hat{a}_{j+1}^n)^{-}\rho_{\alpha, j+1}^n.
\end{equation}
Using the condition \eqref{CFL} and the fact that $\sup_{j} \abs{\hat{a}_j^n} \leq \|\partial_x K\|_{L^{\infty}}(\theta_1+\theta_2)$, we get that $\frac{\Delta t}{\Delta x}\abs{\hat{a}_j^n}<1$.
Therefore $\rho_{\alpha,j}^{n+1}$ is positive as a linear combinaison of positive numbers.

Then, recalling the expression of $\hat{a}_j^n$ given in \eqref{numerical_scheme}, using the fact that $\rho_{\alpha,j}^{n+1}$, $j\in \Z$, are positive and the conservation of the mass, \eqref{mass_conservation} yields
\begin{equation*}
 \abs{\hat{a}_j^{n+1}} \leq \|\partial_x K\|_{L^{\infty}}(\theta_1+\theta_2).
\end{equation*}
\end{proof}

\subsection{Convergence of the numerical solution to the theoretical solution}
In this part, we prove that the numerical scheme given in \eqref{numerical_scheme} converges to the duality solution obtained in Theorem~\ref{Existence}.
\begin{theorem}[Convergence of the numerical scheme]\label{convergence_numerical_scheme}
Let $T>0$, $\Delta x>0$ and $\Delta t>0$ such that \eqref{CFL} is satisfied and denote $N_t=\frac{T}{\Delta t}$.
Let $\rho_\alpha^{ini}\in \mathcal{P}_2(\R)$, we define
\begin{equation*}
 \rho_{\alpha,j}^0 = \int_{x_{j-\frac 12}}^{x_{j+\frac 12}} \rho_\alpha^{ini}(x)\,dx, \qquad j\in \Z.
\end{equation*}
Let us define $\rho_{\alpha, \Delta x} \in \mathcal{M}_b([0,T] \times \mathbb{R})$ by
 \begin{equation}\label{f11-7}
\rho_{\alpha, \Delta x}(t,x)=\sum_{n=0}^{N_t-1} \sum_{j\in \Z}\rho_{\alpha,j}^n \mathds{1}_{[t^n,t^{n+1}[}(t)\delta_{x_j}(x),
\end{equation}
where $(\rho_{\alpha,j}^{n})_{j,n}$ computed by \eqref{numerical_scheme}.\\
Then, we have
\begin{equation*}
 \rho_{\alpha, \Delta x} \rightharpoonup \rho_{\alpha}\quad \text{weakly in}\quad \mathcal{M}_b([0,T] \times \mathbb{R}) \quad \text{as} \quad \Delta x \rightarrow 0, 
\end{equation*}
where $\rho_{\alpha}$ is the duality unique solution of Theorem  \ref{Existence} 
with initial data $\rho_\alpha^{ini}$.
\end{theorem}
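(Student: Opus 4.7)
The plan is to mirror the existence proof of Theorem~\ref{Existence}: establish enough compactness on the piecewise-constant reconstruction $\rho_{\alpha,\Delta x}$ to extract a weak-$*$ cluster point in $\mathcal{S}_\mathcal{M}$, identify that cluster point as a duality solution of \eqref{distributional_eq}--\eqref{macro_velocity} using Lemma~\ref{technical_lemma} together with the weak stability statement of Theorem~\ref{JBouchut}, and then conclude convergence of the full sequence by uniqueness in Theorem~\ref{Existence}.

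For the compactness step I would proceed as follows. By Lemma~\ref{positivity} the CFL condition \eqref{CFL} guarantees $\rho_{\alpha,j}^n\geq 0$ and $\sup_{j,n}|\hat a_j^n|\leq \|\partial_xK\|_{L^\infty}(\theta_1+\theta_2)$, while mass conservation \eqref{mass_conservation} gives $\rho_{\alpha,\Delta x}(t,\cdot)\in\mathcal{P}(\R)$ for every $t\in[0,T]$. This yields uniform boundedness of the total variation. For time equicontinuity, testing the scheme against $\phi\in C_c^\infty(\R)$ and performing a discrete integration by parts gives
\begin{equation*}
\left|\int_\R \phi\, d\rho_{\alpha,\Delta x}(t_2,\cdot)-\int_\R \phi\, d\rho_{\alpha,\Delta x}(t_1,\cdot)\right|\leq \|\phi'\|_{L^\infty}\|\partial_x K\|_{L^\infty}(\theta_1+\theta_2)\,(t_2-t_1)+O(\Delta x),
\end{equation*}
so Ascoli--Arzel\`a (applied as in Step~1 of Theorem~\ref{Existence}) delivers a subsequence converging to some $\rho_\alpha\in\mathcal{S}_\mathcal{M}$. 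In parallel, I would introduce the piecewise-constant interpolant $a^{\Delta x}(t,x)$ of $\hat a_j^n$ and observe that it equals, up to the vanishing diagonal contribution, $\hat a(\theta_1\rho_{1,\Delta x}+\theta_2\rho_{2,\Delta x})$ evaluated at the grid points; the $\lambda$-concavity encoded in \eqref{Kchapobound} then yields a uniform one-sided Lipschitz bound on $a^{\Delta x}$ of the same form as Proposition~\ref{OSL_condition}.

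For the identification step, I would write the scheme in distributional form. Testing against $\varphi\in C_c^\infty([0,T)\times\R)$ and using standard consistency estimates for flux-vector-splitting (the numerical viscosity is $O(\Delta x)$ because of the uniform velocity bound), the scheme becomes
\begin{equation*}
\int_0^T\!\!\int_\R \partial_t\varphi\, d\rho_{\alpha,\Delta x}\,dt + \chi_\alpha \int_0^T\!\!\int_\R \partial_x\varphi\, a^{\Delta x}\, d\rho_{\alpha,\Delta x}\,dt = O(\Delta x).
\end{equation*}
The nonlinear term is exactly of the form covered by Lemma~\ref{technical_lemma}: writing $\hat a_j^n$ as $\int_\R \widehat{\partial_x K}(x_j-y)\,d(\theta_1\rho_{1,\Delta x}^n+\theta_2\rho_{2,\Delta x}^n)(y)$ (the exclusion $i\neq j$ is precisely the $\widehat{\ \cdot\ }$ operation at zero), the lemma passes to the limit in the product and yields the weak formulation of \eqref{distributional_eq}. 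Combined with the uniform discrete OSL estimate and the weak stability part of Theorem~\ref{JBouchut}, the limit $\rho_\alpha$ is a duality solution in the sense of Definition~\ref{defdual}, and second-moment control transfers from $\rho_\alpha^{ini}$ as in Step~3 of Theorem~\ref{Existence}. Uniqueness (Proposition~\ref{uniq}) then forces the whole family $\rho_{\alpha,\Delta x}$ to converge.

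The two delicate points I expect. First, verifying the discrete OSL estimate for $a^{\Delta x}$: the piecewise-constant interpolation introduces jumps whose sign must be controlled using the $\lambda$-concavity of $K$ and oddness of $\partial_x K$, so some care is needed to show $\partial_x a^{\Delta x}\leq\kappa(t)$ in the distributional sense uniformly in $\Delta x$. Second, applying Lemma~\ref{technical_lemma} to the scheme's velocity, which uses $\partial_x K$ evaluated on the grid rather than the mollified $\partial_x K^n$: one must verify that the discrete measure $\rho_{\alpha,\Delta x}^n$ does not concentrate additional mass near the diagonal of $\R^2$, so that the $E_n$-argument of Step~1 of that lemma still applies. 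Both obstacles are resolved by the uniform velocity bound and by the tight weak convergence inherited from the Ascoli step.
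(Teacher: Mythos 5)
Your proposal follows essentially the same route as the paper: positivity and mass conservation under the CFL condition give the uniform total-variation bound and compactness, the scheme is rewritten in weak form with $O(\Delta x)$ consistency remainders, the discrete velocity $\hat a_j^n$ is identified with $\hat{a}(\theta_1\rho_{1,\Delta x}+\theta_2\rho_{2,\Delta x})$ through the diagonal exclusion $i\neq j$, Lemma~\ref{technical_lemma} passes the nonlinear product to the limit, and uniqueness from Theorem~\ref{Existence} identifies the limit. The only deviation is your appeal to the weak-stability part of Theorem~\ref{JBouchut} and to a discrete OSL bound, which the paper does not use (and which would not apply verbatim since the reconstructions $\rho_{\alpha,\Delta x}$ are not exact duality solutions of a transport equation); the paper instead concludes directly from the limiting distributional equation together with the contraction estimate of Proposition~\ref{uniq}.
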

\begin{proof}[Proof of Theorem~\ref{convergence_numerical_scheme}]
For the initial data, it is clear that when $\Delta x\to 0$, we have $\rho_{\alpha,\Delta x}(t=0) \rightharpoonup \rho_\alpha^{ini}$ weakly.
From Lemma~\ref{positivity}, we get that for all $j\in \Z,n\in \N$, values of $\rho_{\alpha,j}^n$ are positive.\\

\textbf{Step 1}: Extraction of a convergent subsequence\\
Equation \eqref{mass_conservation} implies that the total variation of $\rho_{\alpha, \Delta x}$ is fixed and independant of $\Delta x$.
\begin{equation*}
\abs{\rho_{\alpha, \Delta x}}([0,T] \times \R)=T\sum_{j \in \Z}\rho_{\alpha,j}^{ini}.
\end{equation*}
Therefore, there exists a subsequence of $\rho_{\alpha,\Delta x}$ that converges weakly to $\rho_{\alpha} \in \mathcal{M}_b([0,T] \times \mathbb{R})$.\\

\textbf{Step 2}: Modified equation satisfied by $\rho_{\alpha, \Delta x}$\\
Let be $\phi \in C^{\infty}_c((0,T)\times \R)$. From the definition of $\rho_{\alpha, \Delta x}$ in \eqref{f11-7}, we have
\begin{equation*}
 <\partial_t \rho_{\alpha, \Delta x},\phi>=-\int_{[0,T]\times \R}\rho_{\alpha, \Delta x} \partial_t \phi=-\sum_{n=0}^{N_t-1}\sum_{j \in \Z}\rho_{\alpha,j}^n \int_{t^n}^{t^{n+1}}\partial_t \phi(x_j,t) dt,
\end{equation*}
Here and below we use $<\cdot, \cdot>$ to denote the dual product in the sense of distributions. Discrete integration by parts yields
\begin{equation*}
  <\partial_t \rho_{\alpha, \Delta x},\phi>=-\sum_{n=0}^{N_t-1}\sum_{j\in \Z}\rho_{\alpha,j}^n(\phi_j^{n+1}-\phi_j^{n})=\sum_{n=1}^{N_t}\sum_{j\in \Z}(\rho_{\alpha,j}^{n}-\rho_{\alpha,j}^{n-1})\phi_j^n,
\end{equation*}
where we use the notation $\phi_j^n:=\phi(x_j,t^n)$.
Using \eqref{eq_rho_n} and applying transformations to indices yields
$$
  <\partial_t \rho_{\alpha, \Delta x},\phi>=\frac{\Delta t}{\Delta x}\sum_{n=0}^{N_t-1}\sum_{j\in \Z}(\hat{a}_{j}^{n})^{+}\rho_{\alpha,j}^{n}(\phi_{j+1}^{n+1}-\phi_{j}^{n+1})
+\frac{\Delta t}{\Delta x} \sum_{n=0}^{N_t-1}\sum_{j\in \Z}(\hat{a}_{j}^{n})^{-}\rho_{\alpha,j}^{n}(\phi_{j}^{n+1}-\phi_{j-1}^{n+1}).
$$
Taylor expansions gives the existence of $\zeta^j$ in $(x_j,x_{j+1})$ and $\hat{\zeta}^j$ in $(x_{j-1},x_j)$ such that
\begin{equation*}
 \begin{aligned}
&\phi_{j+1}^{n+1}=\phi_j^{n+1} + \Delta x \partial_x \phi_j^{n+1} + \frac{(\Delta x)^2}{2} \partial_{xx} \phi(\zeta^j,t^{n+1}),\\
& \phi_{j-1}^{n+1}=\phi_j^{n+1} -\Delta x \partial_x \phi_j^{n+1} + \frac{(\Delta x)^2}{2} \partial_{xx} \phi(\hat{\zeta}^j,t^{n+1}).
 \end{aligned}
\end{equation*}
Putting together, one obtains
\begin{equation*}
   <\partial_t \rho_{\alpha, \Delta x},\phi>=\Delta t\sum_{n=0}^{N_t-1}\sum_{j\in \Z}\hat{a}_{j}^{n}\rho_{\alpha,j}^{n}\partial_x \phi_j^{n+1}+ R_{\alpha}^1(\Delta x,\Delta t),
\end{equation*}
where $R_{\alpha}^1(\Delta x,\Delta t)$ is given by
\begin{equation*}
 \begin{aligned}
  R_{\alpha}^1(\Delta x,\Delta t):=&\frac{\Delta t}{\Delta x}\sum_{n=0}^{N_t-1}\sum_{j\in \Z}(\hat{a}_{j}^{n})^{+}\rho_{\alpha,j}^{n}\left(\frac{(\Delta x)^2}{2} \partial_{xx} \phi(\zeta^j,t^{n+1})\right)\\
  -&\frac{\Delta t}{\Delta x} \sum_{n=0}^{N_t-1}\sum_{j\in \Z}(\hat{a}_{j}^{n})^{-}\rho_{\alpha,j}^{n}\left(\frac{(\Delta x)^2}{2} \partial_{xx} \phi(\hat{\zeta}^j,t^{n+1})\right).
 \end{aligned}
\end{equation*}
From \eqref{f11-7} and the definition of $\hat{a}$ in \eqref{macro_velocity}, we have 
\begin{equation*}
 \hat{a}(\theta_1 \rho_{1,\Delta x}+\theta_2 \rho_{2,\Delta x})=\sum_{n=0}^{N_t-1}\sum_{j\in \Z} \hat{a}_j^n \mathds{1}_{[t^n,t^{n+1}[}(t)\delta_{x_j}(x).
\end{equation*}
where $\hat{a}_j^n$ are defined in \eqref{numerical_scheme}.
We get that
\begin{equation*}
 <\hat{a}(\theta_1 \rho_{1,\Delta x}+\theta_2 \rho_{2,\Delta x}) \rho_{\alpha, \Delta x},\partial_x \phi>=-\sum_{n=0}^{N_t-1}\sum_{j\in \Z} \hat{a}_j^n\rho_{\alpha,j}^n \int_{t^n}^{t^{n+1}}\partial_x \phi(x_j,t)dt.
\end{equation*}
From the Taylor expansion of $\partial_x \phi(x_j,t)$:
\begin{equation*}
 \partial_x \phi(x_j,t)=\partial_x \phi(x_j,t^{n+1})+(t-t^{n+1}) \partial_{xt} \phi(x_j,\tau_t^{n}),
\end{equation*}
with $\tau_t^{n} \in (t,t^{n+1})$, one sees that
\begin{equation*}
<\hat{a}(\theta_1 \rho_{1,\Delta x}+\theta_2 \rho_{2,\Delta x}) \rho_{\alpha, \Delta x},\partial_x \phi>=-\Delta t \sum_{n=0}^{N_t-1}\sum_{j\in \Z} \hat{a}_j^n\rho_{\alpha,j}^n \partial_x \phi_j^{n+1}+R_{\alpha}^2(\Delta x, \Delta t),
\end{equation*}
where $R_{\alpha}^2(\Delta x,\Delta t)$ is defined as follows: 
\begin{equation*}
R_{\alpha}^2(\Delta x, \Delta t):=-\sum_{n=0}^{N_t-1}\sum_{j\in \Z} \hat{a}_j^n\rho_{\alpha,j}^n \int_{t^n}^{t^{n+1}}(t-t^{n+1}) \partial_{xt} \phi(x_j,\tau_t^{n})dt.
\end{equation*}
The modified equation satisfied by $\rho_{\alpha,\Delta x}$ in the distributional sense writes:
\begin{equation*}
 \int_{0}^T\int_{\R}\rho_{\alpha, \Delta x} \partial_t \phi(t,x)+\int_{0}^{T}\int_{\R} \hat{a}(\theta_1 \rho_{1,\Delta x}+\theta_2 \rho_{2,\Delta x}) \rho_{\alpha, \Delta x} \partial_x \phi=R_{\alpha}^1(\Delta x,\Delta t)+R_{\alpha}^2(\Delta x, \Delta t).
\end{equation*}
From Lemma \ref{positivity}, we deduce that the terms $R_{\alpha}^1$ and $R_{\alpha}^2$ satisfy the estimates:
\begin{equation*}
 \begin{aligned}
  & \abs{R_{\alpha}^1}\leq C T \Delta x \|\partial_{xx} \phi\|_{L^{\infty}}, \qquad
  & \abs{R_{\alpha}^2} \leq C T \Delta x \|\partial_{tx} \phi\|_{L^{\infty}}, 
 \end{aligned}
\end{equation*}
where $C$ stands for a nonnegative constant.
Passing to the limit and using the technical Lemma \ref{technical_lemma}, we conclude that the limit $\rho_{\alpha}$ satisfies \eqref{distributional_eq} in the distributional sense with 
the expression \eqref{macro_velocity} for the velocity. By uniqueness result in Theorem~\ref{Existence}, we deduce that $\rho_{\alpha}$ is the unique duality solution of \eqref{aggregation_eq}.
\end{proof}

\subsection{Dynamics of aggregates and numerical simulations}
In this part, we carry out simulations of Equation \eqref{distributional_eq} obtained thanks to scheme \eqref{numerical_scheme}.
Before numerically simulating the hydrodynamic behavior of the chemotaxis model, we first clarify the aggregate dynamics of this model, especially on the synchronising dynamics between aggregates of different species.   

For the sake of simplicity, we choose $\theta_1=\theta_2=1$ and $\displaystyle K=\frac{1}{2}e^{-\abs{x}}$ in \eqref{distributional_eq}, 
which corresponds to the particular case of bacterial chemotaxis (see \eqref{Kchemo}).
To illustrate the synchronising dynamics of the aggregates for \eqref{distributional_eq},  we consider  the initial data given by sums of aggregates
\begin{equation*}
  \rho_1^0=\sum_k \mu_k \delta_{x_k^0}, \quad \rho_2^0=\sum_k \nu_k \delta_{y_k^0},
\end{equation*}
and look for a solution in the form
\begin{equation*}
 \rho_1(t,x)=\sum_k \mu_k \delta_{x_k(t)}, \quad \rho_2(t,x)=\sum_k \nu_k \delta_{y_k(t)}.
\end{equation*}
 We denote by $u_1$ and $u_2$ antiderivatives of $\rho_1$ and $\rho_2$, respectively. Then the equation \eqref{distributional_eq} reads
\begin{equation}\label{equ}
\partial_t u_\alpha + \chi_\alpha \hat{a} \rho_\alpha = 0, \qquad \alpha = 1,2,
\end{equation}
in the sense of distributions. Direct computation shows that
\begin{equation*}
 \begin{aligned}
  & \hat{a}\rho_1 = \sum_{k,\ell} \mu_k(\mu_\ell \widehat{\partial_x K}(x_k-x_\ell) + \nu_\ell \widehat{\partial_x K}(x_k-y_\ell))\delta_{x_k},\\
  & \hat{a}\rho_2 = \sum_{k,\ell} \nu_k(\mu_\ell \widehat{\partial_x K}(y_k-x_\ell) + \nu_\ell \widehat{\partial_x K}(y_k-y_\ell))\delta_{y_k}.
 \end{aligned}
\end{equation*}
Injecting these expressions into equation \eqref{equ}, the positions $x_k$ and $y_k$   satisfy the system of ODEs
\begin{equation*}
 \begin{aligned}
x'_k(t) &= \chi_1 \sum_{\ell} (\mu_\ell \widehat{\partial_x K}(x_k-x_\ell) + \nu_\ell \widehat{\partial_x K}(x_k-y_\ell)),\\
y'_k(t) &= \chi_2 \sum_{\ell} (\mu_\ell \widehat{\partial_x K}(y_k-x_\ell) + \nu_\ell \widehat{\partial_x K}(y_k-y_\ell)).
 \end{aligned}
\end{equation*}
We recover the same system for particle solutions as in DiFrancesco and Fagioli \cite{Francesco} for two species. 
See also similar aggregate dynamics for single species in \cite{Carrillo,jamesnv}.  In the case of one single species,  the system of ODEs is determinant before any collision of  aggregates, and after each collision, one can always `restart' the particle system till final collapse of all aggregates.  
However, the case of collisions between particles of different species is more complex, since it does not necessarily imply whether the particles of different species will synchronise  or not after colliding. 
In fact, as observed in the following simulations, both `synchronising'(colliding particles of different species staying together) and `non-synchronising' cases can occur, and the transitions between the synchronising types may happen,  depending on the weighted attraction of other aggregates acting on them.

For illustration,  we assume that two points of different species collide at a time $t_0$. For instance, take $x_k(t_0)=y_k(t_0)$ for some $k$, then at this time $t_0$ we have
\begin{equation}\label{f12-1}
x'_k(t_0) = \chi_1 \gamma_k(t_0), \qquad y'_k(t_0)= \chi_2 \gamma_k(t_0), \qquad
\gamma_k=\sum_{\ell,k} (\mu_\ell \widehat{\partial_x K}(x_k-x_\ell) + \nu_\ell \widehat{\partial_x K}(x_k-y_\ell)).
\end{equation}
Note that  $\gamma_k$ characterises  external weighted attraction on $\nu_k$ and $\mu_k$, 
depending on chemo-sensitivities,  distances to other aggregates and the masses of all other aggregates.

Thus if $\chi_1\neq \chi_2$ the velocity of species 1 and 2 is not the same at this time $t_0$. 
However, with the special case at hand, $K(x)=\frac{1}{2}e^{-|x|}$, we have 
$\partial_x K(x_k-y_k)\to \frac 12$ when $x_k\xrightarrow{<} y_k$; and $\partial_x K(x_k-y_k)\to -\frac 12$ when $x_k\xrightarrow{>} y_k$.
We deduce that when $x_k<y_k$ and $x_k\to y_k$ we have
\begin{equation*}
 (y_k-x_k)'(t) = -\frac 12 (\chi_1\nu_k + \chi_2\mu_k) + (\chi_2-\chi_1)\gamma_k(t).
\end{equation*}
Obviously, in this case, particles $\mu_k$ and $\nu_k$ stay together if $(y_k-x_k)'(t)\leq 0$.
On the other hand, when  $y_k<x_k$ and $y_k\to x_k$ we have
\begin{equation*}
 (x_k-y_k)'(t) = -\frac 12 (\chi_1\nu_k + \chi_2\mu_k) + (\chi_1-\chi_2)\gamma_k(t).
\end{equation*}
In this case, particles $\mu_k$ and $\nu_k$ stay together when  $(x_k-y_k)'(t)\leq 0$. Finally, to keep $x_k(t)=y_k(t)$ for $t\geq t_0$, we need the condition 
\begin{equation}\label{f12-2}
|(\chi_1-\chi_2)\gamma_k(t)| \leq \frac 12 (\chi_1\nu_k + \chi_2\mu_k), 
\end{equation}
where $\gamma_k(t)$ is defined in \eqref{f12-1}. 
This relation characterises the weighted attraction of other aggregates acting on them. 
If the  external weighted attraction on $\nu_k$ and $\mu_k$ 
(the left hand side of \eqref{f12-2}) is small,  they will stay together. 
When the external weighted attraction is big, the attraction between $\nu_k$ and $\mu_k$ 
is relatively weak and they will move separately,
the one with bigger  motility will move faster than the other.  

We call \eqref{f12-2} the synchronising condition for $\mu_k$ and $\nu_k$. Similarly, 
we can get the  synchronising condition for any $\mu_i$ and $\nu_j$, $\forall i, j $. 
If more than two aggregates collide simultaneously, we can simply replace them by two 
aggregates of each species, each aggregate accumulating the total mass of each species.

In conclusion,  according to the dynamics defined above, we can see that the initial aggregates  will collapse such that they eventually form a single aggregate of the two species. 
The final aggregate can not separate, which is similar but illustrate more complex behaviour as one species case discussed in \cite{jamesnv}.
Now we give some numerical examples showing ``synchronising'', ``non-synchronising'', transitions between ``synchronising'' and ``non-synchronising'' dynamical behaviours for the hydrodynamic model \eqref{distributional_eq}. 

\textbf{Example 1: Synchronising dynamics}.
Take the chemosensitivity constants  $\chi_1  =10$, $\chi_2 =1$ in  \eqref{distributional_eq}, and consider initial data
\begin{equation*}
\rho_{1}^{0} = 4 e^{-5000(x+0.5)^{2}} + 2 e^{-5000(x-0.5)^{2}}, \quad \rho_{2}^{0} = 2 e^{-5000(x+0.15)^{2}}. 
\end{equation*}
It corresponds to small bumps  located at position $x_{1}(0)=-0.5$, $y_{1}(0)=-0.15$, $x_{2}(0)=0.5$, with mass $\mu_{1}=4m_{0}$,  $\mu_{2}=2m_{0}$, $\nu_{1}=2m_{0}$, where $m_{0}=\int_\R e^{-5000x^{2}} dx$. 
\begin{figure}[h]
\centering
\includegraphics[width= \textwidth]{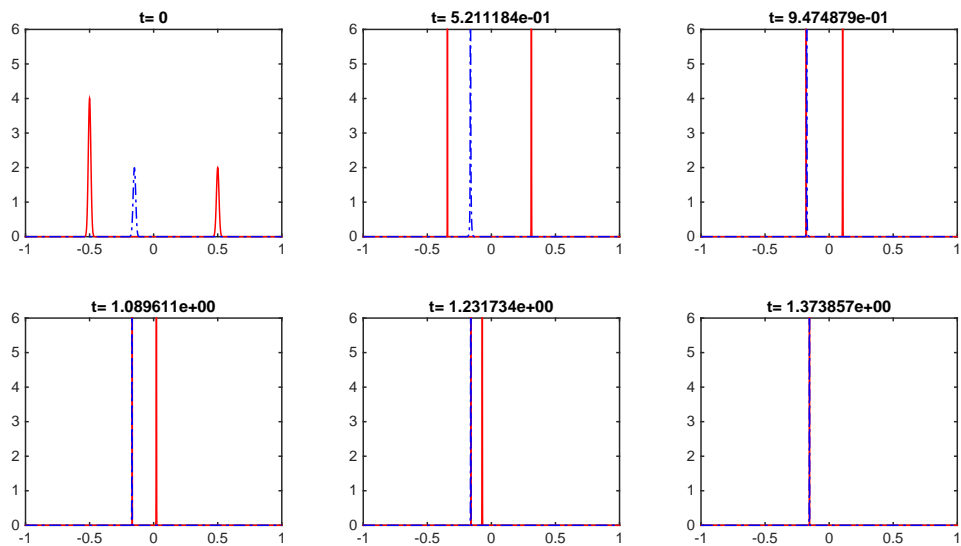}
\caption{Example 1. Snapshots of $\rho_{1}$ (red solid line) and $\rho_{2}$ (blue dashdot). The evolution shows the synchronising dynamics after first collision.}
\label{fig1}
\end{figure} 
Figure \ref{fig1} displays numerical results obtained thanks to the scheme \eqref{numerical_scheme} defined above.
We first observe the fast blow-up with the formation of Dirac deltas.
Then, the numerical simulation shows that $\mu_{1}$ and  $\nu_{1}$   collapse for the first  time at  $t_1 \approx 0.947$, with $x_1(t_1) =  y_1(t_1 )  \approx -0.18$, and  $x_2(t_1 )  \approx 0.12$. 
We check the ``synchronising condition''  \eqref{f12-2}:
\begin{equation*}
\begin{aligned}
 LHS &= |(\chi_1-\chi_2)\gamma_1(t_1)| = (10-1) \times 2 \times {1\over 2} e^{-(x_2-x_1)}= 9e^{-(x_2-x_1)} < 9 , \quad \forall x_1, x_2,\\
 RHS &= \frac 12 (\chi_1\nu_1 + \chi_2\mu_1) = \frac 12 (10\times 2 + 1\times 4) = 12.
\end{aligned}
\end{equation*}
Thus the ``synchronising condition'' \eqref{f12-2} is always satisfied, then they will move together afterwards till final collapse with $\mu_{2}$.
This evolutionary dynamics is shown in Figure \ref{fig1}. The numerical result confirms the synchronising dynamics of the aggregates.

\vspace{2mm}

\textbf{Example 2: Non-synchronising dynamics}.
Take the chemosensitivity constants $\chi_1  =10$, $\chi_2 =1$ in  \eqref{distributional_eq}, and consider initial data
\begin{equation*}
 \rho_{1}^{0} = 2 e^{-5000(x+0.5)^{2}} + 4 e^{-5000(x-0.5)^{2}}, \quad \rho_{2}^{0} = 2 e^{-5000(x+0.15)^{2}}.
\end{equation*}
It corresponds to small bumps  located at  position $x_{1}(0)=-0.5$, $y_{1}(0)=-0.15$, $x_{2}(0)=0.5$, with mass $\mu_{1}=2m_{0}$,  $\mu_{2}=4m_{0}$, $\nu_{1}=2m_{0}$, where $m_{0}=\int_\R e^{-5000x^{2}} dx$. 
The numerical simulation in Figure \ref{fig2}  shows that $\mu_{1}$ and  $\nu_{1}$   collapse for the first  time at  $t_1 \approx 0.9$, $x_1(t_1) =  y_1(t_1 )  \approx -0.15$, and  $x_2(t_1 )  \approx 0.25$. Direct computation shows that
\begin{equation*}
 \begin{aligned}
  LHS &=|(\chi_1-\chi_2)\gamma_1(t_1)| = (10-1) \times 4 \times {1\over 2} e^{-(x_2-x_1)} \approx  18 e^{-0.4} \approx 12.066,\\
  RHS &=\frac 12 (\chi_1\nu_1 + \chi_2\mu_1) = \frac 12 (10\times 2 + 1\times 2) = 11,
 \end{aligned}
\end{equation*}
thus   the ``synchronising condition'' \eqref{f12-2} is not  satisfied, then they will   change their order after intersection and travel separately. 
 The simulation shows $\mu_{1}$ will collapse with $\mu_{2}$ at time $t_2 \approx 1.61 $, and finally collapse with $\nu_{1}$ at time $t_3 \approx 1.85 $. 
 This dynamics is shown in Figure \ref{fig2}. The numerical result confirms the non-synchronising dynamics of the aggregates.
\begin{figure}[h]
\centering
\includegraphics[width= \textwidth]{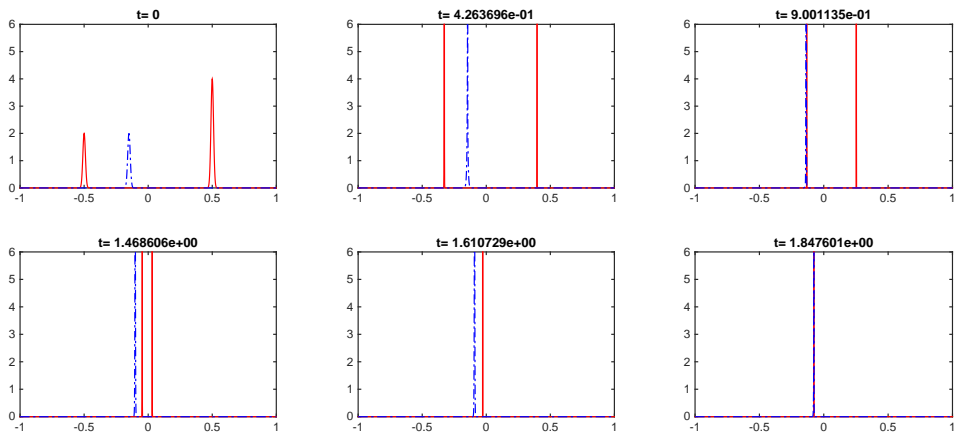}
\caption{Example 2.   Snapshots of $\rho_{1}$ (red solid line) and $\rho_{2}$ (blue dashdot). The evolution shows the non-synchronising dynamics after first collision.}
\label{fig2}
\end{figure}      
 
\textbf{Example 3: Transition from synchronising to  non-synchronising dynamics}.
Take the chemosensitivity constants $\chi_1  =10$, $\chi_2 =1$ in  \eqref{distributional_eq}, and slightly modify the  initial data of Example 2 to
\begin{equation*}
\rho_{1}^{0} = 2 e^{-5000(x+0.5)^{2}} + 4 e^{-5000(x-0.5)^{2}}, \quad \rho_{2}^{0} = 2 e^{-5000(x+0.3)^{2}}. 
\end{equation*}
It corresponds to small bumps  located at  position $x_{1}(0)=-0.5$,  $y_{1}(0)=-0.3$, $x_{2}(0)=0.5$, with mass $\mu_{1}=2m_{0}$,  $\mu_{2}=4m_{0}$, $\nu_{1}=2m_{0}$.
The numerical simulation displayed in Figure \ref{fig3} shows that $\mu_{1}$ and  $\nu_{1}$   collapse for the first  time at  $t_1 \approx 0.47$ with $x_1(t_1) =  y_1(t_1 )  \approx -0.29$, and  $x_2(t_1 )  \approx 0.39$. 
Direct computation shows that
\begin{equation*}
 \begin{aligned}
  LHS &=|(\chi_1-\chi_2)\gamma_1(t_1)| = (10-1) \times 4 \times {1\over 2} e^{-(x_2-x_1)} \approx  18 e^{-0.68} \approx 9.1191, \\
  RHS &=\frac 12 (\chi_1\nu_1 + \chi_2\mu_1) = \frac 12 (10\times 2 + 1\times 2) = 11,
 \end{aligned}
\end{equation*}
 thus   the ``synchronising condition''  \eqref{f12-2} is satisfied, then they will move together toward $\mu_2$. 
 The interesting phenomenon is that, as their distance to $\mu_2$ is decreasing, the LHS of the ``synchronising condition''  \eqref{f12-2} is increasing and finally greater than the RHS. 
 The simulation shows that, at  $t_2 \approx 1.04$, $x_1(t_2) =  y_1(t_2 )  \approx -0.26$, and  $x_2(t_2 )  \approx 0.23$, then
\begin{equation*}
 LHS=|(\chi_1-\chi_2)\gamma_1(t_1)| = (10-1) \times 4 \times {1\over 2} e^{-(x_2-x_1)} \approx  18 e^{-0.49} \approx 11=RHS, 
\end{equation*}
then after this time $t_2$, the interaction type has been changed:  the ``synchronising condition''  \eqref{f12-2} is no longer  satisfied, then they will travel separately. 
Further simulation shows that $\mu_{1}$   collapses with $\mu_{2}$ at time $t_3 \approx 2.037 $, and finally collapse with $\nu_{1}$ at time $t_4 \approx 2.32 $. 
The full dynamics is shown in Figure \ref{fig3}. The numerical result shows the transition from synchronising to  non-synchronising dynamics  of the aggregates.
\begin{figure}[h]
\centering
\includegraphics[width= \textwidth]{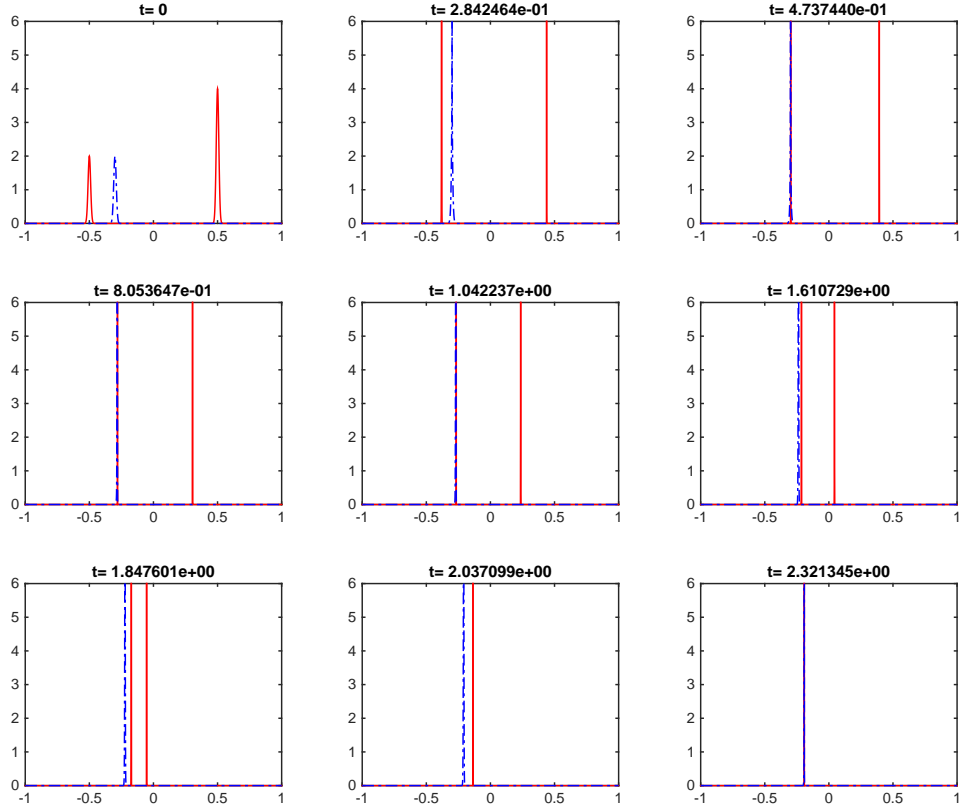}
\caption{Example 3.   Snapshots  of $\rho_{1}$ (red solid line) and $\rho_{2}$ (blue dashdot).  
From time $t_0 = 0$ to $t_1\approx0.47$, $\mu_1$ moves toward $\nu_1$. From time $t_1\approx0.47$ to $t_2 \approx 1.04$, $\mu_1$ and $\nu_1$ travel together. The synchronising type changed at  $t_2 $. After time $t_2$, $\mu_1$ overtakes $\nu_1$ and collapse with $\mu_2$ at $t_3\approx2.037$, and finally all the aggregates collapse at $t_4 \approx 2.32$.
  The evolution shows the transition from synchronising to  non-synchronising dynamics.}
\label{fig3}
\end{figure}      
 
\textbf{Example 4: More complex transition}.
Take the chemosensitivity constants  $\chi_1  =10$, $\chi_2 =1$ in  \eqref{distributional_eq}, and consider initial data
\begin{equation*}
\rho_{1}^{0} = 3 e^{-5000(x+0.8)^{2}} + 1.5 e^{-5000 (x+0.02)^{2}}, \quad \rho_{2}^{0} = 3.5 e^{-5000(x-0.02)^{2}}+ 8.5 e^{-5000(x-0.5)^{2}}. 
\end{equation*}
It corresponds to small bumps  located at  position $x_{1}(0)=-0.8$, $x_{2}(0)=-0.02$, $y_{1}(0)=0.02$, $y_{2}(0)=0.5$, with mass $\mu_{1}=3m_{0}$,  $\mu_{2}=1.5m_{0}$, $\nu_{1}=3.5m_{0}$, $\nu_{2}=8.5m_{0}$.
The snapshots of $\rho_{1}$ and $\rho_2$ are shown in Figure \ref{fig4}.
\begin{figure}[h]
\centering
\includegraphics[width= \textwidth]{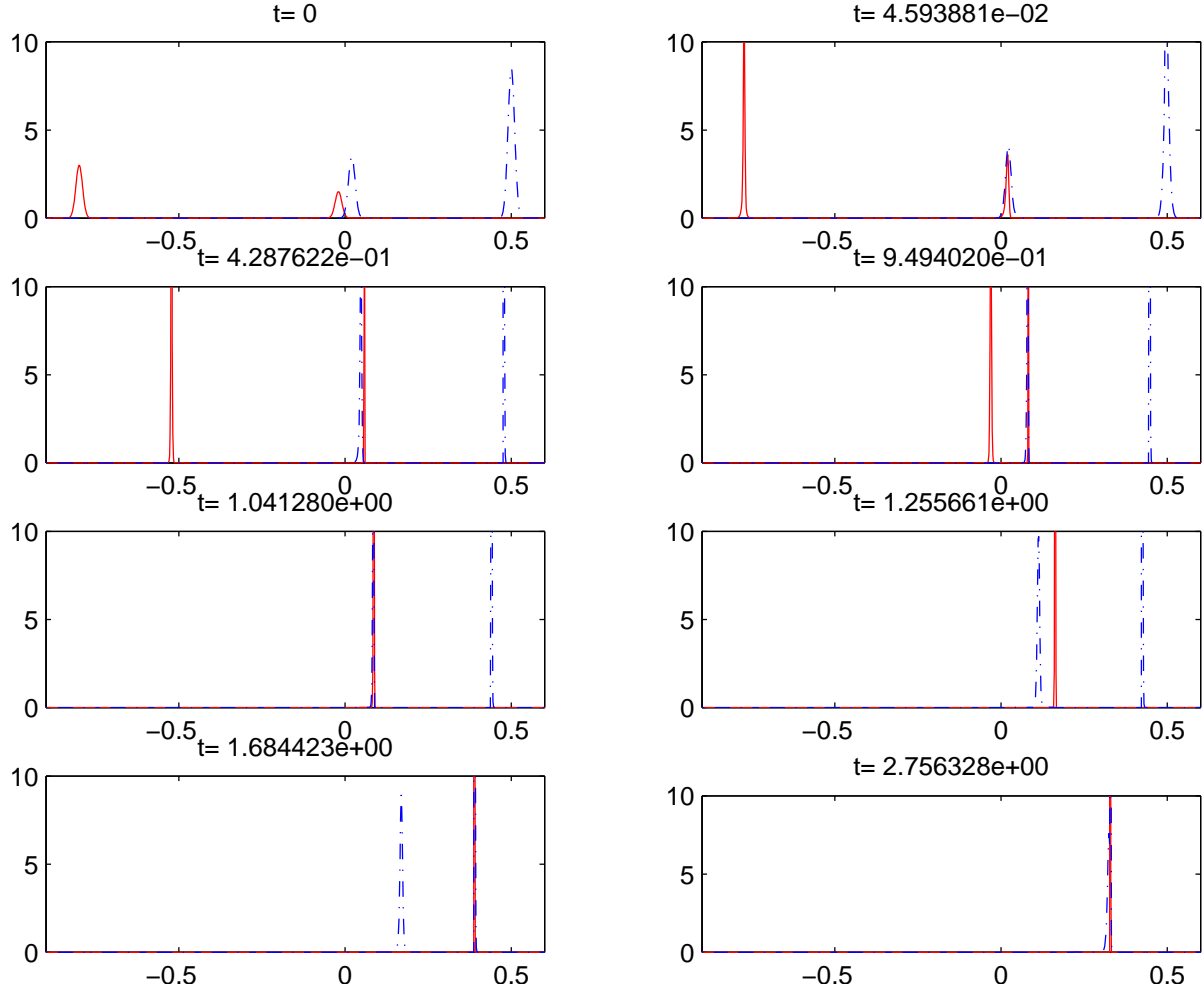}
\caption{Example 4. Snapshots of $\rho_{1}$ (red solid line) and $\rho_{2}$ (blue dashdot).}
\label{fig4}
\end{figure} 
We observe that  $\mu_2$ and $\nu_1$ meet for the first time at $t_1\approx 0.0459$ and satisfy non-synchronising condition so they separate after $t_1$. See the snapshot at $t_2\approx 0.4288$  for evidence.  
They meet for the second time at $t_3\approx 0.9494$ but the synchronising type has been changed: now they satisfy synchronising condition thus they travel together afterwards. 
At time $t_4\approx 1.04$,  $\mu_1$ catches  $\mu_2$ and $\nu_1$. Now we treat  them as $(\mu_1+\mu_2)$ and $\nu_1$: they  satisfy non-synchronising condition and separate, see snapshot at $t_5\approx 1.256$ for evidence. 
At time $t_6\approx 1.684$, $(\mu_1+\mu_2)$ collapse with  $\nu_2$, satisfying  synchronising condition and staying together till final collapse with  $\nu_1$ at $t_7\approx 2.756$. 
The illustration shows the complex changing of interaction types for the aggregate dynamics of two species chemotaxis model. 

{\bf Acknowledgement. } C.Emako and N.Vauchelet acknowledge partial support from the ANR project Kibord, 
ANR-13-BS01-0004 funded by the French Ministry of Research. J.Liao would like to acknowledge partial support  by  National Natural Science Foundation of China (No. 11301182), 
Science and Technology commission of Shanghai Municipality (No. 13ZR1453400), and a scholarship from China Scholarship Council for visiting Laboratoire Jacques-Louis Lions, UPMC, France.

\bibliographystyle{amsplain}
\bibliography{paper_references}
\end{document}